\documentclass[reqno]{amsart}
\usepackage{amsmath}
\usepackage{amssymb}
\usepackage{amsthm}
\usepackage[arrow,matrix]{xy}
\begin{document}
\def\eq#1{{\rm(\ref{#1})}}
\theoremstyle{plain}
\newtheorem{thm}{Theorem}[section]
\newtheorem{lem}[thm]{Lemma}
\newtheorem{prop}[thm]{Proposition}
\newtheorem{cor}[thm]{Corollary}
\theoremstyle{definition}
\newtheorem{dfn}[thm]{Definition}
\newtheorem*{rem}{Remark}
\def\coker{\mathop{\rm coker}}
\def\ind{\mathop{\rm ind}}
\def\Re{\mathop{\rm Re}}
\def\vol{\mathop{\rm vol}}
\def\SO{\mathbin{\rm SO}}
\def\Im{\mathop{\rm Im}}
\def\im{\mathop{\rm im}}
\def\min{\mathop{\rm min}}
\def\Spec{\mathop{\rm Spec}\nolimits}
\def\Hol{{\textstyle\mathop{\rm Hol}}}
\def\Ind{\mathop{\rm Ind}}
\def\ge{\geqslant}
\def\le{\leqslant}
\def\C{{\mathbin{\mathbb C}}}
\def\R{{\mathbin{\mathbb R}}}
\def\N{{\mathbin{\mathbb N}}}
\def\Z{{\mathbin{\mathbb Z}}}
\def\D{{\mathbin{\mathcal D}}}
\def\H{{\mathbin{\mathcal H}}}
\def\M{{\mathbin{\mathcal M}}}
\def\U{{\mathbin{\mathcal U}}}
\def\al{\alpha}
\def\be{\beta}
\def\ga{\gamma}
\def\de{\delta}
\def\ep{\epsilon}
\def\io{\iota}
\def\ka{\kappa}
\def\la{\lambda}
\def\ze{\zeta}
\def\th{\theta}
\def\vt{\vartheta}
\def\vp{\varphi}
\def\si{\sigma}
\def\up{\upsilon}
\def\om{\omega}
\def\De{\Delta}
\def\Ga{\Gamma}
\def\Th{\Theta}
\def\La{\Lambda}
\def\Om{\Omega}
\def\Up{\Upsilon}
\def\ts{\textstyle}
\def\sst{\scriptscriptstyle}
\def\sm{\setminus}
\def\na{\nabla}
\def\pd{\partial}
\def\op{\oplus}
\def\ot{\otimes}
\def\bigop{\bigoplus}
\def\iy{\infty}
\def\ra{\rightarrow}
\def\longra{\longrightarrow}
\def\dashra{\dashrightarrow}
\def\t{\times}
\def\w{\wedge}
\def\bigw{\bigwedge}
\def\d{{\rm d}}
\def\bs{\boldsymbol}
\def\ci{\circ}
\def\ti{\tilde}
\def\ov{\overline}
\def\md#1{\vert #1 \vert}
\def\nm#1{\Vert #1 \Vert}
\def\bmd#1{\big\vert #1 \big\vert}
\def\cnm#1#2{\Vert #1 \Vert_{C^{#2}}} 
\def\lnm#1#2{\Vert #1 \Vert_{L^{#2}}} 
\def\bnm#1{\bigl\Vert #1 \bigr\Vert}
\def\bcnm#1#2{\bigl\Vert #1 \bigr\Vert_{C^{#2}}} 
\def\blnm#1#2{\bigl\Vert #1 \bigr\Vert_{L^{#2}}} 
\title[Deformations of AC Special Lagrangian Submanifolds]{Deformations of
Asymptotically Cylindrical Special Lagrangian Submanifolds with
Moving Boundary}

\author[Sema Salur and A. J. Todd]{Sema Salur and A. J. Todd}

\address {Department of Mathematics, University of Rochester, Rochester, NY, 14627}
\email{salur@math.rochester.edu }

\address {Department of Mathematics, University of Rochester,
Rochester, NY, 14627}
\email{ajtodd@math.rochester.edu }

\begin{abstract}
In \cite{SaAJ}, we proved that, under certain hypotheses, the moduli space of an asymptotically cylindrical special Lagrangian submanifold with fixed boundary of an asymptotically cylindrical Calabi-Yau $3$-fold is a smooth manifold. Here we prove the analogous result for an asymptotically cylindrical special Lagrangian submanifold with moving boundary.
\end{abstract}

\date{}
\maketitle
\section{Introduction}

The goal of the present paper is to prove the following theorem:

\begin{thm}
Assume that $(X,\om,g,\Om)$ is an asymptotically cylindrical Calabi-Yau $3$-fold, asymptotic to $M\times S^1\times(R,\infty)$ with decay rate $\al<0$, where $M$ is a compact, connected $K3$ surface, and that $L$ is an asymptotically cylindrical special Lagrangian $3$-submanifold in $X$ asymptotic to $N\times\{p\}\times (R',\infty)$ for $R'>R$ with decay rate $\be$ for $\al\leq\be<0$ where $N$ is a compact special Lagrangian $2$-fold in $M$ and $p\in S^1$. Let $\ga<0$ be such that $\be<\ga$ and such that $(0,\ga^2]$ contains no eigenvalues of the Laplacian on functions on $N$ nor eigenvalues of the Laplacian on exact $1$-forms on $N$.

Let $\Up:H^1(N,\R)\to H^2_{cs}(L,\R)$ be the linear map coming from the long exact sequence in cohomology (so that $\ker\Up$ is a vector subspace of $H^1(N)$), let $\U$ be a small open neighborhood of $0$ in $\ker\Up$ and let $N_s$ denote the special Lagrangian submanifolds of $M$ near $N$ for some $s\in\U$. Then the moduli space $M^{\ga}_L$ of asymptotically cylindrical special Lagrangian submanifolds in $X$ close to $L$, and asymptotic to $N_s\t\{p\}\t(R',\infty)$ with decay rate $\ga$ is a smooth manifold of dimension $\dim V+\dim\ker\Up=\dim V+b^2(L)-b^1(L)+b^0(L)-b^2(N)+b^1(N)$, where $V$ is the image of $H^1_{cs}(L)$ in $H^1(L)$.
\label{mslthm1}
\end{thm}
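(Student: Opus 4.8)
The plan is to realise this as a graph-deformation problem over $L$ in the manner of McLean, but on weighted H\"older spaces adapted to the asymptotically cylindrical end and enlarged by a finite-dimensional space of non-decaying modes encoding the motion of the asymptotic cross-section. First I would invoke a version of Weinstein's Lagrangian neighbourhood theorem compatible with the cylindrical structure at infinity (as set up in \cite{SaAJ}), identifying a tubular neighbourhood of $L$ in $X$ with a neighbourhood of the zero section in $T^*L$, arranged so that on the end $N\t\{p\}\t(R',\iy)$ it restricts to a Lagrangian neighbourhood theorem for $N$ in $M$. Under this identification a $1$-form $v$ on $L$ of small $C^1$-norm has a graph $L_v\subset X$ which is asymptotically cylindrical, and $L_v$ is special Lagrangian exactly when $\om|_{L_v}=0$ and $\Im\Om|_{L_v}=0$. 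Expressing the pulled-back forms in terms of $v$ yields McLean's map $F(v)=\bigl(\d v,\ *_L(\iota_{L_v}^*\Im\Om)\bigr)$, valued in $2$-forms and functions on $L$, with $F(0)=0$, linearisation $\d F|_0(v)=(\d v,\d^*v)$, and a smooth remainder that is fibrewise quadratic-and-higher in $(v,\na v)$; because the neighbourhood theorem is adapted at infinity, $F$ preserves the weighted spaces below.

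Next I would fix the functional-analytic setup. Since the boundary is allowed to move, the domain is not $\cnm{\cdot}{k+1,a}_\ga$-valued $1$-forms alone, but a direct sum of the weighted space $C^{k+1,a}_\ga(T^*L)$ with a finite-dimensional space $\mathcal{W}$ of smooth $1$-forms on $L$ that on the end agree with $\pi^*\si$ for $\si$ ranging over the harmonic representatives of $\ker\Up\subseteq H^1(N,\R)$. By McLean's unobstructedness theorem for the compact special Lagrangian $N\subset M$ (valid since $M$ is $K3$), such $\si$ are precisely the infinitesimal special Lagrangian deformations of $N$, and the genuine nearby $N_s$ are cut out by a smooth family over $s\in\U$; matching them with the leading terms of elements of $\mathcal{W}$ uses the compatibility of the two neighbourhood theorems at infinity. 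The hypothesis that $(0,\ga^2]$ contains no eigenvalue of the Laplacian on functions or on exact $1$-forms of $N$ says exactly that $\ga$ is a non-critical weight for $\d\op\d^*$ on $L$, so by Lockhart--McOwen theory $\d F|_0:C^{k+1,a}_\ga(T^*L)\op\mathcal{W}\to C^{k,a}_\ga(\La^2T^*L)\op C^{k,a}_\ga(\La^0T^*L)$ is Fredholm and elliptic.

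The heart of the argument is then to compute the kernel and cokernel of this operator and to prove surjectivity onto the appropriate target. Weighted elliptic regularity shows the kernel consists of smooth harmonic $1$-forms on $L$ asymptotic to $\pi^*\si$ with $\si$ harmonic on $N$ and $[\si]\in\ker\Up$; by the relative index theorem together with asymptotically cylindrical Hodge theory (the $L^2$ and bounded harmonic forms of $L$ being computed through the images $H^*_{cs}(L)\to H^*(L)$ and the long exact sequence of the end) this kernel has dimension $\dim V+\dim\ker\Up$, where $V=\im\bigl(H^1_{cs}(L)\to H^1(L)\bigr)$; the same exact sequence together with Poincar\'e--Lefschetz duality $b^k_{cs}(L)=b^{3-k}(L)$ rewrites $\dim\ker\Up$ as $b^2(L)-b^1(L)+b^0(L)-b^2(N)+b^1(N)$, matching the claimed dimension. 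For surjectivity one pairs a cokernel element (again harmonic by regularity) against $\im F$: the $\d v$-component forces it to be closed and represents a class detecting exactness of the pulled-back $\Im\Om$, whose obstruction lives in $H^2_{cs}(L,\R)$ and on the retained boundary modes is exactly $\Up(\si)=0$; hence $\d F|_0$ is surjective after restricting $\mathcal{W}$ to come from $\ker\Up$. I expect this surjectivity step --- identifying the obstruction space with $\im\Up$ and seeing it vanish precisely on the retained modes --- to be the main obstacle, since it is where the moving-boundary hypothesis and the very definition of $\Up$ do the essential work; the cohomological bookkeeping via the exact sequence of the end is the second delicate point.

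Finally, with $\d F|_0$ surjective and with kernel of dimension $\dim V+\dim\ker\Up$, the implicit function theorem for smooth maps of Banach manifolds shows that $F^{-1}(0)$ near $0$ --- which is precisely the set of small $v$ whose graphs are asymptotically cylindrical special Lagrangian with decay rate $\ga$ and boundary cross-section $N_s$ for $s\in\U$ --- is a smooth manifold of that dimension, and weighted elliptic regularity upgrades every such $v$ to be smooth. Since $\ga$ was chosen strictly between $\be$ and $0$ and non-critical, a standard weight-shifting argument shows $F^{-1}(0)$, and hence $M^{\ga}_L$, is independent of the particular such $\ga$; identifying $F^{-1}(0)$ near $0$ with a neighbourhood of $[L]$ in $M^{\ga}_L$ completes the proof.
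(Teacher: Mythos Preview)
Your overall strategy coincides with the paper's: set up a McLean-type deformation map on an extended domain (decaying $1$-forms plus a finite-dimensional space of boundary modes indexed by $\ker\Up$), identify its linearisation as $\d+*\d^*$, use Lockhart--McOwen theory at a non-critical weight, show the nonlinear map lands in the image of the linearisation, apply the implicit function theorem, and read off the dimension from the long exact sequence of the end. The paper works in weighted Sobolev spaces $L^p_{k,\ga}$ rather than your weighted H\"older spaces, and targets $2$-forms $\oplus$ $3$-forms rather than $2$-forms $\oplus$ functions, but these are cosmetic.

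The one place your outline drifts from the paper, and where the logic is muddled, is the ``surjectivity'' step. You write that $\d F|_0$ becomes surjective \emph{after restricting $\mathcal{W}$ to come from $\ker\Up$}; but shrinking the domain cannot enlarge the image, and in fact $(\d+*\d^*)^p_{2+l,\ga}$ has nontrivial cokernel regardless. The paper does not claim surjectivity of the linearisation at all. Instead it proves two separate things: first (Proposition 5.4) that the nonlinear map $G$ takes values in \emph{exact} $2$- and $3$-forms, using only that $\om$ and $\Im\Om$ are exact on a tubular neighbourhood of $L$; second (Proposition 5.5) that such exact forms pair to zero against the cokernel, which by the adjoint description consists of \emph{coclosed} forms. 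Neither step invokes $\Up$. The restriction to $\ker\Up$ enters earlier and for a different reason: it is what guarantees that the classes $[\om|_{L_s}]\in H^2_{cs}(L)$ vanish, so that the deformation problem over each $N_s$ is well-posed and the parameter space $\U$ is the right one. You have fused these two logically independent points---the definition of the allowable boundary parameters and the image-in-image argument---into a single sentence, and the resulting claim about $\d F|_0$ is not correct as stated. Disentangle them and your sketch matches the paper.
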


We begin the paper with a recapitulation of the definitions from Calabi-Yau and special Lagrangian geometry and the definitions of asymptotically cylindrical Calabi-Yau $3$-folds and their asymptotically cylindrical special Lagrangian submanifolds. Then we recall a bit of the theory of elliptic operators on asymptotically cylindrical manifolds developed by Lockhart and McOwen in \cite{LoMc,Lock}. Next we provide a sketch of the proof of the fixed boundary case from \cite{SaAJ} (in fact, portions of that proof will carry over directly to our current work), followed by the proof of Theorem \ref{mslthm1}. Finally, as a application of our work in these papers, we prove the following theorem of Leung:

\begin{thm}
Let $X$ be a Calabi-Yau $3$-fold asymptotic to $M\t S^1\t (R,\infty)$, $M$ a $K3$-surface, and let $L$ be a special Lagrangian submanifold of $X$ asymptotic to $N\t\{p\}\t (R',\infty)$, $N$ a special Lagrangian submanifold of $M$. Let $\mathcal{M}^{SLag}(X)$ be the moduli space of special Lagrangian \emph{cycles} in $X$ and $\mathcal{M}^{SLag}(M)$ the moduli space of special Lagrangian \emph{cycles} in $M$. Then the boundary map $b:\mathcal{M}^{SLag}(X)\to\mathcal{M}^{SLag}(M)$ given by $b(L,D_E)=(N,D_{E'})$ is a Lagrangian immersion.
\label{lagimm}
\end{thm}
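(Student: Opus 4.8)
The plan is to realize the boundary map $b$ as a map between smooth manifolds whose differential is computed via the long exact sequences in cohomology, and then to identify the natural symplectic structures on the two moduli spaces so that $b$ is shown to be (i) an immersion and (ii) Lagrangian. First I would recall the local structure: by McLean's theorem the moduli space $\mathcal{M}^{SLag}(M)$ of special Lagrangian cycles in the $K3$ surface $M$ is smooth of dimension $b^1(N)$, with tangent space at $(N,D_{E'})$ identified with $\mathcal{H}^1(N)\oplus H^1(N,\R)$ (harmonic $1$-forms together with the deformations of the flat connection $D_{E'}$ on the line bundle $E'$); this carries a natural symplectic form $\om_M$ coming from the hyperkähler structure on $M$ paired against the Poincaré pairing on $N$. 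On the $X$ side, Theorem \ref{mslthm1} (together with its fixed-boundary companion from \cite{SaAJ}) shows $\mathcal{M}^{SLag}(X)$ is smooth, and the McLean-type description gives its tangent space in terms of bounded harmonic forms on $L$ plus connection deformations; the key point is that the ``moving boundary'' directions are governed exactly by $\ker\Up\subseteq H^1(N,\R)$, i.e.\ the image of the restriction/boundary map on cohomology.

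Next I would assemble the commutative diagram of long exact sequences of the pair $(L,\partial L)$ (with $\partial L$ the link, a copy of $N$) relating $H^*_{cs}(L)$, $H^*(L)$ and $H^*(N)$, and overlay on it the analogous data for the connections. The differential $db$ at a point $(L,D_E)$ is then, by construction, the ``restrict to the boundary'' map sending a bounded harmonic $1$-form on $L$ (and a deformation of $D_E$) to its asymptotic limit on $N$ (and the induced deformation of $D_{E'}$). Injectivity of $db$ — i.e.\ that $b$ is an immersion — comes down to showing that a deformation of $(L,D_E)$ whose boundary value vanishes must be trivial; this is precisely the statement that the fixed-boundary moduli space (the fibre of $b$) is cut out transversally, which is the content of \cite{SaAJ}, combined with the fact that the decay-rate window $(0,\ga^2]$ was chosen to avoid the relevant Laplace eigenvalues on $N$, so there is no extra cokernel contributing phantom deformations. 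The Lagrangian property then follows from two facts: $\dim\mathcal{M}^{SLag}(X)=\tfrac12\dim\mathcal{M}^{SLag}(M)$ on the relevant components (half-dimensionality, which one reads off from the dimension formula in Theorem \ref{mslthm1} against $\dim\mathcal{M}^{SLag}(M)=b^1(N)$), and the pullback $b^*\om_M=0$, which one checks by writing $\om_M$ as a boundary integral over $N$ and recognizing it, via Stokes' theorem on $L$, as the integral over $L$ of an exact form built from the (closed) McLean forms — the bounded harmonic representatives pair to zero under the cup product $H^1(L)\times H^1(L)\to H^2(L)\to \R$ because one factor lies in the image of $H^1_{cs}(L)$ while the other is, modulo that image, detected only by its boundary value.

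Concretely, the steps in order are: (1) fix notation for the flat line bundles $E\to L$, $E'\to N$ and recall that $b(L,D_E)=(N,D_{E'})$ with $D_{E'}=D_E|_N$; (2) recall McLean's identification of $T_{(N,D_{E'})}\mathcal{M}^{SLag}(M)\cong \mathcal{H}^1(N)\oplus H^1(N,\R)$ and the symplectic form $\om_M$; (3) use Theorem \ref{mslthm1} to identify $T_{(L,D_E)}\mathcal{M}^{SLag}(X)$ with (bounded harmonic $1$-forms on $L$) $\oplus\,\ker\Up$, and write down $db$ as the asymptotic-limit map; (4) prove injectivity of $db$ using the fixed-boundary transversality of \cite{SaAJ} and the eigenvalue-avoidance hypothesis on $\ga$; (5) compute $b^*\om_M$ via Stokes on $L$ and show it vanishes, using the factorization of the cup-product pairing through $H^1_{cs}(L)\to H^1(L)$; (6) count dimensions to confirm the image is half-dimensional, so $b$ is Lagrangian.

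The main obstacle I expect is step (5): one must be careful that the McLean $1$-forms on the asymptotically cylindrical $L$ are genuinely integrable against each other (they decay like $O(e^{\ga t})$ in the cylindrical coordinate $t$, so the $L^2$ and cohomological pairings converge) and that the boundary term produced by Stokes' theorem is exactly $b^*\om_M$ up to the correct constant; tracking the matching of the hyperkähler triple on $M$ with the asymptotic data of $(\om,\Om)$ on $X$, and making sure the flat-connection (Hitchin-type) part of the symplectic form also pulls back to zero, is where the real work lies. Everything else is bookkeeping with the long exact sequences that have already appeared in the setup.
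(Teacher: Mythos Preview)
Your step (4) has a genuine gap. You claim that injectivity of $db$ ``is precisely the statement that the fixed-boundary moduli space (the fibre of $b$) is cut out transversally,'' but transversality only says the fibre is \emph{smooth}, not that it is a point. Indeed Theorem~\ref{sl1thm} gives the fixed-boundary moduli space dimension $\dim V$ with $V=\im(H^1_{cs}(L)\to H^1(L))$, so whenever $V\neq 0$ there are nontrivial special-Lagrangian deformations of $L$ that fix $N$; on the Lagrangian side of the tangent space these sit in $\ker(db)$. The transversality result of \cite{SaAJ} therefore cannot yield injectivity on its own, and you give no argument for how the connection component of $db$ compensates. Relatedly, your identification of $T_{(N,D_{E'})}\mathcal{M}^{SLag}(M)$ as $\mathcal{H}^1(N)\oplus H^1(N,\R)$ with total dimension $b^1(N)$ is internally inconsistent (that direct sum has dimension $2b^1(N)$) and does not match the paper's (Hitchin's) description $H^1(N,\R)\times H^0(N,\operatorname{ad}E')$ with symplectic form the cup product $H^1(N)\times H^0(N)\to\R$; getting this right matters for both the isotropy computation and the half-dimension count.

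The paper's route is different and almost purely algebraic-topological: it identifies $db$ with $i^*\oplus\beta$ where $i^*:H^1(L)\to H^1(N)$ is restriction and $\beta:H^0(L)\to H^0(N)$ handles the connection part, observes that the cup-product symplectic form vanishes on $\im i^*\oplus\im\beta$ by naturality of the cup product (your Stokes idea in step (5) is the analytic version of this and is fine in spirit), and then obtains half-dimensionality of the image directly from Poincar\'e--Lefschetz duality for the pair $(L,N)$ --- the classical fact that $\im(H^k(L)\to H^k(\partial L))$ is maximal isotropic for the intersection form when $\dim\partial L=2k$. That duality argument is what replaces your steps (4) and (6): once you know $\dim(\im db)$ equals half the dimension of the target, any immersion statement must come from matching this against $\dim T_{(L,D_E)}\mathcal{M}^{SLag}(X)$, not from transversality of the fibre.
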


\section{Calabi-Yau and Special Lagrangian Geometry}

We begin with the definitions of a Calabi-Yau $n$-fold and special Lagrangian $n$-submanifold, and we give the definitions of (asymptotically) cylindrical Calabi-Yau $3$-folds and (asymptotically) cylindrical special Lagrangian $3$-submanifolds. References for this section include: Harvey and Lawson, \cite{HaLa}; Joyce, \cite{Joyc1}; McLean, \cite{McLe}; and Kovalev, \cite{Kova}.

\begin{dfn}
A \emph{complex} $n$-dimensional {\em Calabi-Yau} manifold $(X,\om,g,\Om)$ is a K\"ahler manifold with zero first Chern class, that is $c_1(X)=0$. In this case, $(X,\om,g,\Om)$ is also called a \emph{Calabi-Yau $n$-fold}.
\end{dfn}

\begin{rem}
As the complex structure coming from the K\"ahler condition will play no direct role in our work, we omit it from our notation.
\end{rem}

\begin{dfn}
A \emph{real} $n$-dimensional submanifold $L\subseteq X$ is \emph{special Lagrangian} if $L$ is Lagrangian (i.e. $\om|_L\equiv 0$) and $\Im\Om$ restricted to $L$ is zero. In this case, we will also call $L$ a \emph{special Lagrangian $n$-submanifold}.
\end{dfn}

\begin{rem}
$\Re\Om$ restricts to be the volume form with respect to the induced metric on special Lagrangian submanifolds, so that the special Lagrangian submanifolds $L$ of a Calabi-Yau manifold $X$ are exactly the calibrated submanifolds of $X$ with respect to the calibration form $\Re\Om$ (see \cite[Section 3.1]{HaLa} for more information regarding this point).
\end{rem}

We now define cylindrical and asymptotically cylindrical Calabi-Yau manifolds. See \cite{Kova} for more information on the definitions in this section.

\begin{dfn}
A Calabi-Yau $3$-fold $(X_0,\om_0,g_0,\Om_0)$ is called {\em cylindrical} if $X_0=M\times S^1\times\R$ where $M$ is a connected, compact $K3$ surface with K\"ahler form $\ka_I$ and holomorphic $(2,0)$-form $\ka_J+i\ka_K$, and $(\om_0,g_0,\Om_0)$ is compatible with the product structure $M\times S^1\times\R$, that is, $\om_0=\ka_I+\d\th\w\d t$, $\Om_0=(\ka_J+i\ka_K)\w(\d\th+i\d t)$ and $g_0=g_{(M\times S^1)}+\d t^2$.
\label{acdfn1}
\end{dfn}

\begin{rem}
The indices on the $2$-forms $\ka_I$, $\ka_J$ and $\ka_K$ on $M$ are meant to reflect the hyperk\"ahler structure of $M$. (This is consistent with the notation in \cite{Kova}.)
\end{rem}

\begin{dfn}
A connected, complete Calabi-Yau $3$-fold $(X,\om,g,\Om)$ is called {\em asymptotically cylindrical with decay rate} $\al<0$ if there exists a cylindrical Calabi-Yau $3$-fold $(X_0,\om_0,g_0,\Om_0)$ as in Definition \ref{acdfn1}, a compact subset $K\subset X$, a real number $R>0$, and a diffeomorphism $\Psi: M\times S^1\times (R,\infty) \rightarrow X\setminus K$ such that $\Psi^*(\om)=\om_0 +\d\xi_1$ for some $1$-form $\xi_1$ with $\bmd{\na^k\xi_1}=O(e^{\al t})$ and $\Psi^*(\Om) =\Om_0 +\d\xi_2$ for some complex $2$-form $\xi_2$ with $\bmd{\na^k\xi_2}=O(e^{\al t})$ on $M\times S^1\times\R$ for all $k\ge 0$, where $\na$ is the Levi-Civit\`a connection of the cylindrical metric $g_0=g_{(M\times S^1)} + dt^2$.
\label{acdfn2}
\end{dfn}

This definition implies that the restriction of the K\"ahler form $\om$ and holomorphic $(3,0)$-form $\Om$ equals to the above for large $t$, up to a possible error of order $O(e^{\al t})$ for some parameter $\al<0$. Notice also in this definition that we assume $X$ and $M$ are connected, so that $X$ only has one end; since we are working with Ricci-flat manifolds, this is not a restrictive assumption by a result obtained by the first author in \cite{Salu1}.

We now define cylindrical and asymptotically
cylindrical special Lagrangian submanifolds similarly.

\begin{dfn}
Let $(X_0,\om_0,g_0,\Om_0)$ be a cylindrical Calabi-Yau $3$-fold as in Definition \ref{acdfn1}. A $3$-dimensional submanifold $L_0$ of $X_0$ is called {\em cylindrical special Lagrangian} if $L_0=N\times \{p\}\times\R$ for some compact special Lagrangian submanifold $N$ in $M$, a point $p$ in $S^1$ and the restrictions of $\om_0$ and $\Im\Om_0$ to $L_0$ are zero, that is, $\om_0|_{L_0}=(\kappa_I+\d\th\w\d t)|_{L_0} =0$ and $\Im(\Om_0)|_{L_0}=\Im((\kappa_J+i\kappa_K)\w(\d t+i\d\th))|_{L_0}=0$.
\label{acdfn3}
\end{dfn}

\begin{dfn}
Let $(X_0,\om_0,g_0,\Om_0)$ , $M\times S^1\times\R$, $(X,\om,g,\Om)$, $K, R, \Psi$ and $\al$ be as in Definitions \ref{acdfn1} and \ref{acdfn2}, and let $L_0=N\times \{p\}\times\R$ be a cylindrical special Lagrangian $3$-fold in $X_0$ as in Definition \ref{acdfn3}.

A connected, complete special Lagrangian 3-fold $L$ in
$(X,\om,g,\Om)$ is called \emph{asymptotically cylindrical with decay rate} $\be$ for $\al\leq\be<0$ if there exists a compact
subset $K'\subset L$, a normal vector field $v$ on
$N\t\{p\}\t(R',\infty)$ for some $R'>R$, and a diffeomorphism
$\Phi:N\t\{p\}\t(R',\infty)\ra L\sm K'$ such that the following
diagram commutes:
\begin{equation}
\begin{gathered}
\xymatrix{M\t S^1\t (R',\iy) \ar[d]^\subset & N\t\{p\}\t (R',\iy)
\ar[l]^{\;\;\;\;\exp_v} \ar[r]_{\;\;\;\;\;\;\;\;\;\Phi} & (L\sm
K')
\ar[d]^\subset \\
M\t S^1\t (R,\iy)\ar[rr]^\Psi && (X\sm K), }
\end{gathered}
\label{aceq1}
\end{equation}
and $\bmd{\na^kv}=O(e^{\be t})$ on $N\t\{p\}\t(R',\iy)$ for
all $k\geq 0$.
\label{acdfn4}
\end{dfn}

Notice that here again, we require $L$ to be connected; however, unlike the case of the ambient manifold, it may be that $N$ is \emph{not} connected, so that $L$ may have multiple ends. This will not present a problem in the analysis since we are assuming the ends have the same decay rate.

\section{Analysis on Asymptotically Cylindrical Special Lagrangian $3$-Manifolds}

Lockhart and McOwen \cite{Lock, LoMc} setup a general framework for elliptic operators on noncompact manifolds where the basic tools are weighted Sobolev spaces. Using these spaces and the notion of asymptotically cylindrical linear elliptic partial differential operators, they get weighted Sobolev embedding theorems and recover elliptic regularity and Fredholm results.
\begin{rem}
For an alternate approach to these types of results on noncompact manifolds, see Melrose \cite{Mel1, Mel2}.
\end{rem}

Let $L$ be an asymptotically cylindrical special Lagrangian $3$-manifold with data as in Definition \ref{acdfn4}.

\begin{dfn}
Let $A$ be a vector bundle on $L$ with smooth metrics $h$ on the fibers and a connection $\na_A$ on $A$ compatible with $h$; let $A_0$ be a vector bundle on $N\times\{p\}\t\R$ that is invariant under translations in $\R$, that is a \emph{cylindrical} vector bundle, with metrics $h_0$ on the fibers and $\na_{A_0}$ a connection on $A_0$ compatible with $h_0$ where $h_0$ and $\na_{A_0}$ are also invariant under translations in $\R$.

$A$, $h$ and $\na_A$ are said to be \emph{asymptotically cylindrical}, asymptotic to $A_0$, $h_0$ and $\na_{A_0}$ respectively, if $\Phi^*(A)\cong A_0$ on $N\t\{p\}\t(R',\infty)$ with $|\Phi^*(h)-h_0|=O(e^{\be t})$ and $|\Phi^*(\na_A)-\na_{A_0}|=O(e^{\be t})$ as $t\to\infty$.
\end{dfn}

Recall that for $k\geq 0$,  $$L^p_k(A)=\left\{s\in\Ga(A): s\text{ is }k\text{-times weakly differentiable and }\Vert s\Vert_{L^p_k}<\infty\right\},$$
where $\Ga(A)$ is the space of \emph{all} sections of $A$,
$$\Vert s\Vert_{L^p_k}=\left(\sum_{j=0}^k \int_L \left|\na^j_As\right|^p \text{ } dvol_L\right)^{1/p}$$
and $dvol_L$ is the volume element of $L$, and $$L^p_{k,\rm{loc}}(A)=\{s\in\Ga(A):\phi s\in L^p_k(A) \text{ for all }\phi\in C^\infty_0(L)\},$$ where $C^\infty_0(L)$ is the set of all compactly-supported smooth functions on $L$.

\begin{dfn}
Choose a smooth function $\rho:L\to\R$ such that $\Phi^*(\rho)\equiv t$ on $N\times\{p\}\t(R',\infty)$. By Definition \ref{acdfn4}, this condition prescribes $\rho$ on $L\setminus K'$, so it is only necessary to smoothly extend $\rho$ over $K'$. For $p\geq 1$, $k\geq 0$ and $\ga\in\R$, the \emph{weighted Sobolev space} $L^p_{k,\ga}(A)$ is then the set of sections $s$ of $A$ such that $s\in L^p_{\rm{loc}}(A)$, $s$ is $k$-times weakly differentiable and $$\Vert s \Vert_{L^p_{k,\ga}}=\left(\sum_{j=0}^k \int_L e^{-\ga\rho}\left|\na^j_As\right|^p\text{ } dvol_L \right)^{1/p} < \infty.$$
In particular, $L^p_{k,\ga}(A)$ is a Banach space; further, note that different choices of $\rho$ give the same space $L^p_{k,\ga}(A)$ with equivalent norms since $\rho$ is uniquely determined outside of $K$.

Define the Banach space $C^k_{\ga}(A)$ of continuous sections $s$ of $A$ with $k$ continuous derivatives such that $e^{-\ga\rho}|\na^j_As|$ is bounded for each $j=0,1,\ldots,k$ where the norm is given by $$\Vert s \Vert_{C^k_{\ga}}=\sum_{j=0}^k \sup_{L}e^{-\ga\rho}\left|\na^j_As\right|.$$
\label{wss}
\end{dfn}

In general, from \cite[Theorem 1.2]{Bart}, \cite[Theorem 3.10]{Lock} and \cite[Lemma 7.2]{LoMc} there is the following Weighted Sobolev Embedding Theorem (adapted to our case):

\begin{thm}[Weighted Sobolev Embedding Theorem]
Suppose that $k,l$ are integers with $k\geq l\geq 0$ and that $p,q,\ga,\ov{\ga}$ are real numbers with $p,q>1$. Then
\begin{enumerate}
    \item If $\frac{k-l}{3}\geq\frac{1}{p}-\frac{1}{q}$ and $\ov{\ga}\geq\ga$ there is a continuous embedding of $L^p_{k,\ga}(A)\hookrightarrow L^q_{l,\ov{\ga}}(A)$;
    \item If $\frac{k-l}{3}>\frac{1}{p}$ and $\ov{\ga}\geq\ga$ there is a continuous embedding of $L^p_{k,\ga}(A)\hookrightarrow C^l_{\ov{\ga}}(A)$;
\end{enumerate}
\label{embedding}
\end{thm}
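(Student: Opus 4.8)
The plan is to deduce this from the classical Sobolev embedding theorem on bounded domains in $\R^3$ by a localization-and-summation argument adapted to the cylindrical end, exactly along the lines of \cite{Bart,Lock,LoMc}. First I would split each assertion into two composable embeddings: a pure weight change $L^q_{l,\ga}(A)\hookrightarrow L^q_{l,\ov\ga}(A)$ (resp. $C^l_\ga(A)\hookrightarrow C^l_{\ov\ga}(A)$) valid for $\ov\ga\ge\ga$, followed by a Sobolev improvement at fixed weight $L^p_{k,\ga}(A)\hookrightarrow L^q_{l,\ga}(A)$ (resp. $\hookrightarrow C^l_\ga(A)$) valid under the stated exponent condition. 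Statements (1) and (2) then follow by composing these two halves.

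The weight change is the easy half. On the end $N\t\{p\}\t(R',\iy)$ we have $\rho\equiv t\ge R'>0$, so $\ov\ga\ge\ga$ forces $e^{-\ov\ga\rho}\le e^{-\ga\rho}$ pointwise; hence the $\ov\ga$-weighted integrand is dominated by the $\ga$-weighted one on the end, while on the compact core $K'$ the two weights are uniformly comparable. The same pointwise domination gives $C^l_\ga\hookrightarrow C^l_{\ov\ga}$. Thus $\Vert s\Vert_{\ov\ga}\le C\Vert s\Vert_\ga$ using only $\ov\ga\ge\ga$ and the lower bound on $\rho$.

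For the Sobolev improvement I would exploit that $(L,g)$ has bounded geometry: by Definitions \ref{acdfn2} and \ref{acdfn4} the induced metric, together with the bundle data $h$ and $\na_A$, is $C^k$-close with $O(e^{\be t})$ error to the translation-invariant cylindrical model $(N\t[0,1],g_N+\d t^2,A_0,h_0,\na_{A_0})$ on each unit band, and the core is compact. Cover $L$ by the compact core together with the bands $B_n=\Phi(N\t\{p\}\t[n,n{+}1])$ for the integers $n>R'$. On the fixed model band the classical Sobolev embedding in dimension $3$ gives $W^{k,p}\hookrightarrow W^{l,q}$ when $\frac{k-l}{3}\ge\frac1p-\frac1q$ and $W^{k,p}\hookrightarrow C^l$ when $\frac{k-l}{3}>\frac1p$; transferring this to each $B_n$ via the diffeomorphism $\Phi$ shifted by $t\mapsto t-n$, and absorbing the $O(e^{\be t})$ discrepancies in metric, volume form and connection, yields these local estimates with a constant uniform in $n$. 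Since $\rho$ varies by at most $1$ on $B_n$, the weight $e^{-\ga\rho}$ is constant up to the fixed factor $e^{\md\ga}$ there, so it may be pulled outside each local estimate. Summing the band estimates against the weight, and recombining the band-wise $L^p_k$ quantities into the global weighted norm by a discrete $\ell^p$--$\ell^q$/H\"older inequality, produces the global bound $\Vert s\Vert_{L^q_{l,\ga}}\le C\Vert s\Vert_{L^p_{k,\ga}}$, with the $C^l_\ga$ case obtained by taking a supremum over bands in place of the sum.

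The main obstacle is the \emph{uniformity} of the local Sobolev constants across the infinitely many bands, and this is precisely where the asymptotically cylindrical hypothesis is indispensable: a single constant is available only because the metric, connection and injectivity radius are uniformly controlled by the fixed cylindrical model, with errors that are bounded (indeed decay like $e^{\be t}$). A secondary bookkeeping point is that the improvement is $L^p\to L^q$ \emph{within} each band but an $\ell^p\to\ell^q$ summation \emph{across} bands; reconciling the two via H\"older's inequality is what pins down the exact exponent conditions $\frac{k-l}{3}\ge\frac1p-\frac1q$ and $\frac{k-l}{3}>\frac1p$. The compact core contributes only a bounded term via the ordinary Sobolev embedding theorem, and combining it with the two half-embeddings above completes the proof; the detailed constants are exactly those recorded in \cite{Bart,Lock,LoMc}.
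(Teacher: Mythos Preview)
The paper does not give its own proof of this theorem: it is simply quoted from \cite[Theorem 1.2]{Bart}, \cite[Theorem 3.10]{Lock} and \cite[Lemma 7.2]{LoMc}, so there is no in-paper argument to compare against. Your sketch---split off the trivial weight change $\ga\mapsto\ov\ga$, then use bounded geometry of the asymptotically cylindrical end to cover $L$ by a compact core and translated unit bands on which the ordinary $3$-dimensional Sobolev inequalities hold with a uniform constant, and finally reassemble via a discrete $\ell^p$--$\ell^q$ estimate---is exactly the standard proof in those references, so your approach is the expected one.

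One small caution on the bookkeeping: with the paper's convention $\Vert s\Vert_{L^p_{k,\ga}}^p=\sum_j\int_L e^{-\ga\rho}\md{\na^j s}^p$ (weight $e^{-\ga\rho}$ rather than $e^{-p\ga\rho}$), the bandwise regrouping does not reduce to the plain $\ell^p\hookrightarrow\ell^q$ embedding; the leftover factor $e^{-\ga n(1-q/p)}$ has to be absorbed, and this is where one genuinely needs either $q\ge p$ together with the sign of $\ga$, or the strict inequality $\ov\ga>\ga$ to supply extra decay. This is handled carefully in \cite{Lock,LoMc}, but your phrase ``discrete $\ell^p$--$\ell^q$/H\"older inequality'' glosses over it; if you were writing this out in full you would want to track that term explicitly.
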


Now suppose that $A,B$ are two asymptotically cylindrical vector bundles on $L$ which are asymptotic to the cylindrical vector bundles $A_0,B_0$ on $N\times\{p\}\t\R$. Let $F_0:C^{\infty}(A_0)\to C^{\infty}(B_0)$ be a cylindrical linear partial differential operator of order $k$, that is, a linear partial differential operator which is invariant under translations in $\R$, from \emph{smooth} sections $C^{\infty}(A_0)$ of $A_0$ to \emph{smooth} sections $C^{\infty}(B_0)$ of $B_0$. Let $F:C^{\infty}(A)\to C^{\infty}(B)$ be a linear partial differential operator of order $k$ on $L$.

\begin{dfn}
$F$ is said to be an \emph{asymptotically cylindrical operator}, asymptotic to $F_0$, if, under the identifications $\Phi^*(A)\cong A_0$, $\Phi^*(B)\cong B_0$ on $N\times\{p\}\t\R$, $|\Phi^*(F)-F_0|=O(e^{\be t})$ as $t\to\infty$.
\end{dfn}

From these definitions $F$ extends to a bounded linear operator
$$F^p_{k+l,\ga}:L^p_{k+l,\ga}(A)\to L^p_{l,\ga}(B)$$
for all $p>1$, $l\geq 0$ and $\ga\in\R$. We then have the following elliptic regularity result \cite[Theorem 3.7.2]{Lock}:

\begin{thm}
Let $F$ and $F_0$ be as above. If $1<p<\infty$, $l\in\Z$ and $\ga\in\R$, then for all $s\in L^p_{k+l,loc}(A)$,
$$\Vert s\Vert_{L^p_{k+l,\ga}}\leq C(\Vert Fs\Vert_{L^p_{l,\ga}}+\Vert s\Vert_{L^p_{l,\ga}})$$
for some $C$ independent of $s$.
\label{reg}
\end{thm}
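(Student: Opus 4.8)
The plan is to establish the estimate by the standard two-part localization for manifolds with cylindrical ends, after first reducing to the unweighted case $\ga=0$. Since this is an elliptic regularity statement, I take $F$ (and hence its model $F_0$) to be elliptic of order $k$. To remove the weight I would conjugate: writing $s=e^{\ga\rho/p}w$, the definition of the weighted norm gives $\Vert s\Vert_{L^p_{k+l,\ga}}\asymp\Vert w\Vert_{L^p_{k+l,0}}$ and $\Vert Fs\Vert_{L^p_{l,\ga}}\asymp\Vert\tilde F w\Vert_{L^p_{l,0}}$, where $\tilde F=e^{-\ga\rho/p}\circ F\circ e^{\ga\rho/p}$. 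Because $\na\rho$ and its derivatives are bounded on the end (there $\rho=t$, so $\na^2\rho=0$), $\tilde F$ is again an asymptotically cylindrical elliptic operator of order $k$ with the same principal symbol, asymptotic to the translation-invariant operator $e^{-\ga t/p}\circ F_0\circ e^{\ga t/p}$. Thus it suffices to prove $\Vert w\Vert_{L^p_{k+l}}\le C(\Vert\tilde Fw\Vert_{L^p_l}+\Vert w\Vert_{L^p_l})$, i.e. the case $\ga=0$. I will take $l\ge 0$, which is the main case; negative $l$ then follows by duality against the formal adjoint.

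For $\ga=0$ I would decompose $L$ into a compact core and the cylindrical end and estimate each separately. Fix $R''>R'$ and let $K''=\{\rho\le R''\}$, a compact manifold with boundary. Covering $K''$ by finitely many coordinate charts trivializing $A$ and $B$, I would apply the classical interior $L^p$ elliptic estimate of Agmon--Douglis--Nirenberg/Calder\'on--Zygmund type in each chart; since there are finitely many charts this yields $\Vert w\Vert_{L^p_{k+l}(K''')}\le C(\Vert Fw\Vert_{L^p_l(K'')}+\Vert w\Vert_{L^p_l(K'')})$ on a slightly smaller core $K'''$, with $C$ depending only on the fixed geometry of $K''$. The heart of the matter is the end, which I would cover by the unit bands $B_T=N\t\{p\}\t(T,T+2)$ for integers $T\ge R'$. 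On each band the needed input is a local elliptic estimate, and the decisive point is that for the model operator $F_0$ the cylindrical metric, connection, and $F_0$ itself are all invariant under translation in $t$; translating $B_T$ back to $B_{R'}$ therefore shows that the local elliptic constant for $F_0$ on $B_T$ is \emph{independent of $T$}.

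To pass from $F_0$ to $F$ on the end I write $F=F_0+(F-F_0)$ and treat the difference as a small perturbation: by the hypothesis $\vert\Phi^*(F)-F_0\vert=O(e^{\be t})$ its coefficients are $O(e^{\be T})$ on $B_T$, so $\Vert(F-F_0)w\Vert_{L^p_l(B_T)}\le C e^{\be T}\Vert w\Vert_{L^p_{k+l}(B_T)}$. For $T$ beyond some $T_0$ this term is small enough to be absorbed into the left-hand side of the uniform $F_0$-estimate, leaving a band estimate $\Vert w\Vert_{L^p_{k+l}(B_T)}\le C(\Vert Fw\Vert_{L^p_l(B_T')}+\Vert w\Vert_{L^p_l(B_T')})$ with $C$ independent of $T$, where $B_T'$ is a slightly enlarged band; the finitely many bands with $R'\le T\le T_0$ are absorbed into the compact estimate. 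Finally I would assemble the pieces: raising each local estimate to the $p$-th power and summing over the charts of $K''$ and the bands $B_T$, using that the enlarged bands $B_T'$ have uniformly bounded overlap multiplicity, produces the global inequality with a single constant $C$, the lower-order terms collecting into $\Vert w\Vert_{L^p_l}$. The main obstacle is precisely this uniformity on the noncompact end: one must verify that the local elliptic constant does not degenerate as $T\to\infty$, which rests entirely on the translation invariance of the model data and on the decay $O(e^{\be t})$ rendering $F-F_0$ negligible for large $T$; without both ingredients the summation over the bands would diverge.
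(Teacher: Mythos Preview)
The paper does not give its own proof of this statement: it is quoted verbatim as \cite[Theorem 3.7.2]{Lock} and used as a black box. So there is nothing in the paper to compare your argument against directly.

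That said, your sketch is the standard route to this estimate and is essentially how Lockhart proceeds: conjugate away the weight using the isomorphism $s\mapsto e^{-\ga\rho/p}s$ between $L^p_{k+l,\ga}$ and $L^p_{k+l,0}$, obtain the estimate on a compact core by the usual interior $L^p$ elliptic theory, obtain it on the end by covering with translated unit bands and exploiting the translation invariance of $F_0$ to make the local constant uniform in $T$, absorb the decaying perturbation $F-F_0$, and then sum using bounded overlap. Your remark that ellipticity must be assumed is correct (the paper only makes this explicit in the definition that follows the theorem, but the phrase ``elliptic regularity result'' preceding the statement makes the intent clear), and your handling of negative $l$ by duality is the right idea. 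I see no genuine gap in the outline; the only places one has to be a little careful in a full write-up are checking that the conjugation really does preserve the asymptotically cylindrical structure (which it does, since $\rho=t$ on the end so $\na\rho$ is parallel there) and making the absorption argument precise on the finitely many bands where $e^{\be T}$ is not yet small, but you have flagged both of these.
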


\begin{dfn}
Assume now that $F,F_0$ are also elliptic on $L$ and $N\times\{p\}\t\R$ respectively. Extend $F_0$ to $F_0:C^{\infty}(A_0\ot_{\R}\C)\to C^{\infty}(B_0\ot_{\R}\C)$, and define $\D_{F_0}$ as the set of $\ga\in\R$ such that for some $\de\in\R$ there exists a nonzero section $s\in C^{\infty}(A_0\ot_{\R}\C)$, invariant under translations in $\R$, such that $F_0(e^{(\ga+i\de)t}s)=0$.
\end{dfn}

\noindent The conditions for $F^p_{k+l,\ga}$ to be Fredholm are now given by \cite[Theorem 1.1]{LoMc}.

\begin{thm}
Using the setup of this section, $\D_{F_0}$ is a discrete subset of $\R$, and for $p>1$, $l\geq 0$ and $\ga\in\R$, $F^p_{k+l,\ga}:L^p_{k+l,\ga}(A)\to L^p_{l,\ga}(B)$ is Fredholm if and only if $\ga\not\in\D_{F_0}$.
\label{fred}
\end{thm}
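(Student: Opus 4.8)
The plan is to follow the classical approach of Lockhart and McOwen, reducing everything to the translation-invariant model operator $F_0$ on $N\t\{p\}\t\R$ and converting that, via the Fourier transform in the $\R$-variable, into a holomorphic family of elliptic operators on the compact cross-section $N$. Conjugating by the weight shows that $e^{-\ga t}F_0e^{\ga t}$ is again cylindrical, so it suffices to treat $\ga=0$ and regard $F_0$ as a $t$-translation-invariant elliptic operator $L^p_{k+l}(N\t\R)\to L^p_l(N\t\R)$; the Fourier transform in $t$ replaces it by multiplication by an operator-valued polynomial $\widehat F_0(\ze)=\sum_{j=0}^k A_j\ze^j$, holomorphic in $\ze\in\C$, whose coefficients $A_j$ are differential operators on $N$, and a general weight $\ga$ merely shifts attention to the vertical line $\Re\ze=\ga$. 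Ellipticity of $F_0$ makes $\widehat F_0(\ze)$, for each fixed $\ze$, an elliptic operator of index $0$ on the compact manifold $N$ with $\ze$-independent principal symbol, and the elliptic estimate improves---for $\Re\ze$ in a bounded interval and $\md{\Im\ze}$ large---to invertibility of $\widehat F_0(\ze)$ with $\bnm{\widehat F_0(\ze)^{-1}}\to 0$. By the analytic Fredholm theorem the set $\Sigma:=\{\ze\in\C:\widehat F_0(\ze)\text{ is not invertible}\}$ is discrete and has only finitely many points in each horizontal strip $\{a\le\Re\ze\le b\}$; since index $0$ makes non-invertibility equivalent to $\ker\widehat F_0(\ze)\ne 0$, i.e.\ to the existence of a nonzero translation-invariant $s$ with $F_0(e^{\ze t}s)=0$, we obtain $\D_{F_0}=\{\Re\ze:\ze\in\Sigma\}$, which is therefore a closed, discrete, nowhere-dense subset of $\R$.

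Suppose now that $\ga\notin\D_{F_0}$, so the line $\Re\ze=\ga$ avoids $\Sigma$. Then $\widehat F_0(\ze)^{-1}$ exists everywhere on that line and, combining the high-frequency decay with compactness of the part where $\md{\Im\ze}$ is bounded, is uniformly bounded there as a map between the relevant Sobolev spaces on $N$; inverting the Fourier transform yields a bounded two-sided inverse $F_0^{-1}$ between the weighted spaces $L^p_{l,\ga}$ and $L^p_{k+l,\ga}$ on the exact cylinder. (For $p=2$ this is just Plancherel; for general $p$ one bootstraps from the $p=2$ case using the interior elliptic estimate of Theorem~\ref{reg} together with a duality argument, which is the technically delicate point.) Now choose cutoffs $\chi_0,\chi_\iy$ on $L$ with $\chi_0+\chi_\iy\equiv 1$, $\chi_\iy$ supported in the end and eventually equal to $1$ there, auxiliary cutoffs $\ti\chi_0,\ti\chi_\iy$ each equal to $1$ on the support of the corresponding $\chi$, and an interior parametrix $G$ for $F$ over a neighbourhood of the compact core; set $P=\ti\chi_0\,G\,\chi_0+\ti\chi_\iy\,F_0^{-1}\,\chi_\iy$, a bounded operator $L^p_{l,\ga}(B)\to L^p_{k+l,\ga}(A)$. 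Because $F$ is asymptotic to $F_0$ with $F-F_0=O(e^{\be t})$, the errors $FP-I$ and $PF-I$ split into smoothing operators supported in a fixed compact set, compactly supported operators of negative order, and operators whose coefficients are $O(e^{\be t})$; each of these is compact on the weighted Sobolev spaces---approximate the decaying coefficients by compactly supported ones and invoke the weighted Rellich lemma, a consequence of Theorem~\ref{embedding}. Hence $F^p_{k+l,\ga}$ has a two-sided parametrix modulo compact operators and is Fredholm.

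Conversely, if $\ga\in\D_{F_0}$, pick $\ze_0=\ga+i\de_0\in\Sigma$ and a nonzero $s\in\ker\widehat F_0(\ze_0)$, so that $F_0(e^{\ze_0 t}s)=0$ on the cylinder, and for $R\to\iy$ let $u_R$ be the transplant to $L$ via $\Phi$ of $\psi((t-R)/R)\,e^{\ze_0 t}\,s$, where $\psi\in C^\iy_0(\R)$ is a fixed bump function. Since $\md{e^{\ze_0 t}}=e^{\ga t}$ is precisely matched by the weight $e^{-\ga\rho}$, one has $\nm{u_R}_{L^p_{k+l,\ga}}\gtrsim R^{1/p}$; on the other hand $F_0u_R$ is a sum of terms each carrying at least one derivative of $\psi((t-R)/R)$, whence $\nm{F_0u_R}_{L^p_{l,\ga}}=O(R^{1/p-1})$, and $\nm{(F-F_0)u_R}_{L^p_{l,\ga}}\lesssim e^{\be R/2}$ is exponentially small relative to $\nm{u_R}_{L^p_{k+l,\ga}}$. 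Thus $\nm{Fu_R}_{L^p_{l,\ga}}/\nm{u_R}_{L^p_{k+l,\ga}}\to 0$, while $u_R/\nm{u_R}_{L^p_{k+l,\ga}}\rightharpoonup 0$ weakly and $K\bigl(u_R/\nm{u_R}_{L^p_{k+l,\ga}}\bigr)\to 0$ in norm for every compact operator $K$ (the supports escape to infinity); so the a priori estimate $\nm{u}\le C(\nm{Fu}+\nm{Ku})$ characterising semi-Fredholm operators fails, and $F^p_{k+l,\ga}$ is not Fredholm. The main obstacle throughout is the sufficiency direction---specifically, establishing the uniform control of $\widehat F_0(\ze)^{-1}$ along the line $\Re\ze=\ga$, including its decay as $\md{\Im\ze}\to\iy$, and then upgrading the clean $L^2$ version of that bound to the $L^p$ statement asserted here.
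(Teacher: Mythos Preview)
The paper does not prove this theorem at all: it simply cites it as \cite[Theorem 1.1]{LoMc}. Your proposal is a faithful sketch of the Lockhart--McOwen argument itself---Fourier transform in the cylindrical variable, analytic Fredholm theory for the resulting holomorphic family $\widehat F_0(\ze)$ on the compact cross-section, parametrix patching on the asymptotically cylindrical manifold, and a Weyl-sequence obstruction for the converse---so in content it agrees with what the paper is invoking, only you have supplied the proof rather than the citation.

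One small remark: your assertion that $\widehat F_0(\ze)$ has index $0$ for every $\ze$ is not quite immediate from ellipticity alone; rather, you first establish invertibility for $\md{\Im\ze}$ large (which you do), and then constancy of the index on the connected domain forces index $0$ everywhere. You implicitly use this ordering, so the argument is fine, but it is worth stating the logic in that order. The passage from $L^2$ to $L^p$ bounds on $F_0^{-1}$, which you correctly flag as the delicate step, is handled in \cite{LoMc} via an operator-valued Mihlin-type multiplier argument rather than the bootstrap you describe; either route works, but the multiplier approach is more direct.
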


This theorem and Theorem \ref{reg} imply that $\ker(F^p_{k+l,\ga})$ is a finite-dimensional vector space of smooth sections of $A$ whenever $\ga\not\in\D_{F_0}$, and from the Weighted Sobolev Embedding Theorem and the fact that $\ker(F^p_{k+l,\ga})$ is invariant under small changes in $\ga$ we have the following:

\begin{lem}
If $\ga\not\in\D_{F_0}$, then the kernel $\ker(F^p_{k+l,\ga})$ is the same for any choices of $p>1$ and $l\geq 0$ and is a finite-dimensional vector space consisting of smooth sections of $A$.
\label{ker}
\end{lem}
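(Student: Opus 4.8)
The plan is to derive the statement from the two preceding results (Theorems \ref{reg} and \ref{fred}) together with the elementary observation that the kernel of a Fredholm operator consists of solutions of the homogeneous equation $Fs=0$. First I would fix $\ga\not\in\D_{F_0}$ so that, by Theorem \ref{fred}, the operator $F^p_{k+l,\ga}:L^p_{k+l,\ga}(A)\to L^p_{l,\ga}(B)$ is Fredholm for every $p>1$ and $l\geq 0$; in particular its kernel is finite-dimensional. To see that this kernel is independent of $p$ and $l$, I would argue in two directions. For the $l$-independence: if $s\in L^p_{k+l,\ga}(A)$ with $Fs=0$, then $Fs=0\in L^p_{l',\ga}(B)$ for every $l'\geq 0$, and I would bootstrap using the elliptic estimate of Theorem \ref{reg}, which gives $\nm{s}_{L^p_{k+l',\ga}}\leq C\bigl(\nm{Fs}_{L^p_{l',\ga}}+\nm{s}_{L^p_{l',\ga}}\bigr)=C\nm{s}_{L^p_{l',\ga}}$; starting from $s\in L^p_{k,\ga}(A)\subset L^p_{0,\ga}(A)$ and iterating, one concludes $s\in L^p_{k+l',\ga}(A)$ for all $l'$, hence $s$ is smooth and lies in $\bigcap_{l'}L^p_{k+l',\ga}(A)$. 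Thus the kernel does not depend on $l$, and all its elements are smooth sections of $A$.

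For the $p$-independence I would exploit the Weighted Sobolev Embedding Theorem \ref{embedding}. Given a kernel element $s$ (which by the previous paragraph lies in $L^p_{k+l,\ga}(A)$ for all $l$), part (2) of Theorem \ref{embedding} with $l$ large shows $s\in C^l_{\ov\ga}(A)$ for any $\ov\ga\geq\ga$, and in particular $s$ decays at the prescribed weighted rate; from such pointwise control one sees $s\in L^q_{m,\ga}(A)$ for every $q>1$ and every $m$, and since $Fs=0$ still holds, $s$ lies in $\ker(F^q_{k+m,\ga})$. The reverse inclusion is symmetric. Combining the two directions shows that for fixed $\ga\not\in\D_{F_0}$ the space $\ker(F^p_{k+l,\ga})$ is one and the same finite-dimensional vector space, consisting entirely of smooth sections, for all admissible $p$ and $l$. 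This is essentially the remark already made in the text preceding the lemma, so the proof amounts to assembling Theorems \ref{reg}, \ref{fred} and \ref{embedding} in this order.

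The only genuinely delicate point is the passage between different values of $p$: the embedding $L^p_{k,\ga}\hookrightarrow L^q_{l,\ov\ga}$ in Theorem \ref{embedding}(1) requires the weight inequality $\ov\ga\geq\ga$, so one cannot simply embed into a space with the \emph{same} weight when raising $p$. I would handle this by going through the $C^l_{\ov\ga}$ spaces of part (2) as indicated above — having established that a kernel element decays like $e^{\ga\rho}$ in every $C^l$ norm, one recovers membership in $L^q_{m,\ga}$ for arbitrary $q,m$ directly, without needing to enlarge the weight. (Alternatively, one can note that the finite-dimensional kernel is locally constant in $\ga$, so one may first shrink the weight slightly, apply the embedding, and then observe the kernel did not change; this is the "invariant under small changes in $\ga$" argument alluded to in the text.) Apart from this bookkeeping with weights, every step is a direct citation of the results already recorded in this section.
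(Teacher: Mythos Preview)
Your proposal is correct and matches the paper's approach exactly: the paper does not give a separate proof of this lemma but simply states, in the sentence preceding it, that finite-dimensionality and smoothness follow from Theorems \ref{fred} and \ref{reg}, and that independence of $p,l$ follows from the Weighted Sobolev Embedding Theorem together with invariance of the kernel under small changes in $\ga$. You have spelled out precisely these steps, and you even flag the weight-bookkeeping issue and resolve it via the same ``small change in $\ga$'' argument the paper invokes.
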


Let $F^*:C^{\infty}(B)\to C^{\infty}(A)$ be the formal adjoint of $F$; that is, $F^*$ is the asymptotically cylindrical linear elliptic partial differential operator of order $k$ on $L$ such that
$$\langle Fs,\tilde{s}\rangle_{L^2(B)}=\langle s,F^*\tilde{s}\rangle_{L^2(A)}$$
for all compactly-supported $s\in C^{\infty}(A)$, $\tilde{s}\in C^{\infty}(B)$.

Then for $p>1$, $l\geq 0$ and $\ga\not\in\D_{F_0}$, $(F^*)^q_{-l,-\ga}:L^q_{-l,-\ga}(B)\to L^q_{-k-l,-\ga}(A)$ is the dual operator of $F^p_{k+l,\ga}$ where $q>1$ satisfies $\frac{1}{p}+\frac{1}{q}=1$, $L^q_{-l,-\ga}(B)$ and $L^q_{-k-l,-\ga}(A)$ are isomorphic as Banach spaces to the dual spaces $(L^p_{l,\ga}(B))^*$ and $(L^p_{k+l,\ga}(A))^*$ respectively and these isomorphisms identify $(F^*)^q_{-l,-\ga}$ with $(F^p_{k+l,\ga})^*$. Further, since Theorem \ref{reg} also holds for negative differentiability, $\ker((F^*)^q_{-l,-\ga})=\ker((F^*)^q_{k+m,-\ga})$ for all $m\in\Z$. This allows us to identify $\coker(F^p_{k+l,\ga})$ with $\ker((F^*)^q_{k+m,-\ga})^*$ for $\ga\not\in\D_{F_0}$, $p,q>1$ with $\frac{1}{p}+\frac{1}{q}=1$ and $l,m\geq 0$; moreover, the index of $F^p_{k+l,\ga}$ is then given by
\begin{equation}
\ind(F^p_{k+l,\ga})=\dim\ker(F^p_{k+l,\ga})-\dim\ker((F^*)^q_{k+m,-\ga}).
\label{ind1}
\end{equation}

\section{Deformations of Asymptotically Cylindrical Special Lagrangian $3$-folds with Fixed Boundary}

In \cite{SaAJ}, we proved the following theorem:

\begin{thm} Assume that $(X,\om,g,\Om)$ is an asymptotically cylindrical Calabi-Yau $3$-fold, asymptotic to $M\times S^1\times(R,\infty)$ with decay rate $\al<0$, where $M$ is a compact, connected $K3$ surface, and that $L$ is an asymptotically cylindrical special Lagrangian $3$-submanifold in $X$ asymptotic to $N\times\{p\}\times (R',\infty)$ for $R'>R$ with decay rate $\be$ for $\al\leq\be<0$ where $N$ is a compact special Lagrangian $2$-fold in $M$ and $p\in S^1$.

If $\ga<0$ is such that $\be<\ga$ and such that $(0,\ga^2]$ contains no eigenvalues of the Laplacian on functions on $N$ nor eigenvalues of the Laplacian on exact $1$-forms on $N$, then the moduli space $\M_L^\ga$ of asymptotically cylindrical special Lagrangian submanifolds in $X$ near $L$, and asymptotic to $N\times\{p\}\times(R',\infty)$ with decay rate $\ga$, is a smooth manifold of dimension $\dim V$ where $V$ is the image of the natural inclusion of $H^1_{cs}(L,\R)$ in $H^1(L,\R)$.
\label{sl1thm}
\end{thm}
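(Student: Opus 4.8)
The plan is to follow McLean's strategy of identifying nearby special Lagrangian submanifolds with graphs of $1$-forms, and then to analyze the resulting nonlinear elliptic equation on the weighted Sobolev spaces introduced above. First I would recall that, because $L$ is special Lagrangian and hence Lagrangian, a tubular neighborhood of $L$ in $X$ is symplectomorphic to a neighborhood of the zero section in $T^*L$, with the special Lagrangian condition corresponding to a pair of equations on a $1$-form $\nu$ on $L$: the Lagrangian condition forces $\om|_{\text{graph}} = 0$, which to leading order reads $\d\nu = 0$, and the vanishing of $\Im\Om$ forces a second equation which to leading order reads $\d(*\nu)=0$, i.e. $\d^*\nu=0$. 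Thus the linearization of the deformation map at $L$ is essentially $\nu\mapsto(\d\nu,\d^*\nu)$, and McLean's theorem identifies the relevant elliptic operator with $\d+\d^*$ acting on $1$-forms. The asymptotically cylindrical decay of $(\om,g,\Om)$ and of $L$ (decay rate $\be$) guarantees that this operator is asymptotically cylindrical in the precise sense of Section 3, asymptotic on the end to the corresponding cylindrical operator built from $N\t\{p\}\t\R$.

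Next I would set up the deformation problem on the weighted space $L^p_{k+1,\ga}(T^*L)$ for suitable $p,k$, insisting on decay rate $\ga$ so that the graph is asymptotic to the \emph{fixed} cylinder $N\t\{p\}\t(R',\iy)$; this is exactly the constraint that distinguishes the fixed-boundary case. The full nonlinear map $P$ sending $\nu$ to $(\om|_{\text{graph}\,\nu},\,\Im\Om|_{\text{graph}\,\nu})$, regarded as a map between the appropriate weighted Banach spaces, has linearization $\d+\d^*$ at $0$. The moduli space $\M^\ga_L$ near $L$ is then the zero set $P^{-1}(0)$ modulo the identifications. To apply the implicit function theorem (or, more precisely, to conclude smoothness of $P^{-1}(0)$), the key is to show that $\d+\d^*$, as the map $F^p_{k+1,\ga}$, is Fredholm and, crucially, that its linearized cokernel can be made to vanish or be accounted for, so that $P^{-1}(0)$ is a smooth manifold whose tangent space is $\ker(F^p_{k+1,\ga})$. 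By Theorem \ref{fred} the operator is Fredholm precisely because $\ga\not\in\D_{F_0}$; here the hypothesis that $(0,\ga^2]$ contains no eigenvalues of the Laplacian on functions or on exact $1$-forms on $N$ is exactly the statement that $\ga$ avoids the discrete set $\D_{F_0}$, since the indicial roots of the cylindrical operator $\d+\d^*$ are governed by the spectrum of the Laplacian on $N$.

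The main analytic obstacle, and the heart of the argument, is the dimension count: showing that $\dim\ker(F^p_{k+1,\ga})=\dim V$, where $V=\im(H^1_{cs}(L)\to H^1(L))$. I would compute this kernel by relating the decaying ($\ga<0$) harmonic $1$-forms on $L$ to the topology of $L$. The closed and coclosed $1$-forms decaying at rate $\ga$ form a Hodge-theoretic space on the asymptotically cylindrical manifold $L$; by the Lockhart--McOwen theory, the space of $L^2$ (or fast-decaying) harmonic $1$-forms is identified with the image of compactly-supported cohomology in ordinary cohomology, i.e. with $V$. The choice of $\ga$ in the gap $(\be,0)$ below the first nonzero indicial root ensures that no additional ``boundary'' or slowly-decaying modes enter the kernel, so the only contributions are the genuinely $L^2$ harmonic forms corresponding to $V$. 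The delicate point is to verify that no extra kernel is introduced by the coupling between the function-Laplacian modes and the exact $1$-form modes on $N$, which is precisely why both eigenvalue conditions on $N$ appear in the hypothesis; ruling these out is what pins the kernel down to exactly $V$. Once the kernel is identified and the nonlinear map is shown to be a submersion onto a complement of the cokernel (handled by an obstruction/gauge-fixing argument as in \cite{SaAJ}), the implicit function theorem yields that $\M^\ga_L$ is a smooth manifold of dimension $\dim V$, completing the proof.
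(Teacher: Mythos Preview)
Your proposal is correct and follows essentially the same route as the paper's sketch: identify nearby special Lagrangians with graphs of small $1$-forms via an asymptotically compatible tubular neighborhood map $\Theta:B_{\ep'}(T^*L)\to T_L$, linearize the deformation map to $(\d+*\d^*)^p_{2+l,\ga}$, use the eigenvalue hypothesis on $N$ to guarantee Fredholmness, identify the kernel with $V=\im(H^1_{cs}(L)\to H^1(L))$ via weighted Hodge theory, and invoke the Implicit Mapping Theorem. The one place where your sketch is vaguer than the paper's is the handling of the cokernel: it is not a ``gauge-fixing'' argument but rather the observation that, since $\om$ and $\Im\Om$ are closed and vanish on $L$, they are \emph{exact} on $T_L$, so the image of the nonlinear map consists of exact $2$- and $3$-forms; as the cokernel of $(\d+*\d^*)^p_{2+l,\ga}$ is dual to the kernel of $(\d^*+\d*)^q_{2+m,-\ga}$, which consists of \emph{coclosed} forms, the $L^2$-pairing of the image with the cokernel vanishes identically, forcing the image of $F$ into the image of the linearization and making the Implicit Mapping Theorem applicable without further obstruction.
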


\begin{proof}[Sketch of Proof]
Let $(X,\om,g,\Om)$ be an asymptotically cylindrical Calabi-Yau $3$-fold with decay rate $\al<0$, which is asymptotic to the cylindrical Calabi-Yau $3$-fold $(X_0,\om_0,g_0,\Om_0)$. Then there exists a $K3$ surface $M$ such that $X_0=M\times S^1\times\R$ as in Definition \ref{acdfn1}, and there exist $K\subset X$ a compact subset, $R>0$ and a diffeomorphism $\Psi:M\times S^1\times(R,\infty)\to X\setminus K$ such that $|\Psi^*(g)-g_0|=O(e^{\al t})$, $\Psi^*(\om)=\om_0+\d\xi_1$ for some for some $1$-form $\xi_1$ such that $|\na^k\xi_1|=O(e^{\al t})$ for all $k\geq 0$ and $\Psi^*(\Om)=\Om_0+\d\xi_2$ for some holomorphic $2$-form $\xi_2$ such that $|\na^k\xi_2|=O(e^{\al t})$ for all $k\geq 0$.

Let $L$ be an asymptotically cylindrical special Lagrangian $3$-submanifold of $X$ with decay rate $\be$ such that $\al\leq\be<0$, which is asymptotic to the cylindrical special Lagrangian $3$-submanifold $L_0$ of $X_0$. Then there exists a special Lagrangian $2$-submanifold $N$ of $M$ and a point $p\in S^1$ such that $L_0=N\t\{p\}\t\R$, and there exist $K'\subset L$, $R'>R$, a normal vector field $v$ on $N\t\{p\}\t(R',\infty)$ such that $|\na^k v|=O(e^{\be t})$ and a diffeomorphism $\Phi:N\t\{p\}\t(R',\infty)\to L\setminus K'$ making Diagram (\ref{aceq1}) commute.

From the normal bundle $\nu_N$ of $N$ and associated exponential map $\exp_N:\nu_N\to M$ we get the normal bundle $\nu_N\t\{p\}\t(R',\infty)$ of $N\t\{p\}\t(R',\infty)$ with exponential map given by $\exp_N\t\io\t id:\nu_N\t\{p\}\t(R',\infty)\to M\t S^1\t(R',\infty)$ where $\io$ is the natural inclusion map. Then for some $\ep>0$ small enough, we get a diffeomorphism $\exp_N:B_{2\ep}(\nu_N)\to T_N$ of a subbundle $B_{2\ep}(\nu_N)$ of the normal bundle $\nu_N$, whose fibers are that of the ball of radius $2\ep$ about $0$ in the vector space $\nu_N$, with a tubular neighborhood $T_N$ of $N$ in $M$. This in turn gives us a diffeomorphism $\exp_N\t\io\t id:B_{2\ep}(\nu_N)\t\{p\}\t(R',\infty)\to T_N\t S^1\t(R',\infty)$.

Using these maps we are able to construct an identification $\Th:B_{\ep'}(\nu_L)\to T_L$ of a subbundle $B_{\ep'}(\nu_L)$ of the normal bundle $\nu_L$ of $L$ with a tubular neighborhood of $L$ in $X$ in such a way that submanifolds $\tilde{L}$ of $X$ near $L$ correspond to small sections of the normal bundle $\nu_L$ and the asymptotic convergence of $\tilde{L}$ to $N\t\{p\}\t\R$ is reflected in the convergence of these sections to $0$. Finally, we can think of $\Th:B_{\ep'}(T^*L)\to T_L$ since we are on a special Lagrangian manifold.

Let $\eta:L\to T^*L$ be a $1$-form on $L$, and let $\Ga_{\eta}$ denote the graph of $\eta$ in $X$. Define the deformation map $F:B_{\ep'}(T^*L)\to\La^2T^*L\op\La^3T^*L$ by $F(\eta)=((\Th\circ\eta)^*(-\om),(\Th\circ\eta)^*(\Im\Om))$ and extend to the asymptotically cylindrical operator $F=F^p_{2+l,\ga}:L^p_{2+l,\ga}(B_{\ep'}(\nu_L))\to\La^2T^*L\op \La^3T^*L$ where $p>3$, $l\geq 1$ and $\be<\ga<0$. We will say more shortly about $\ga$. Note that $3$-submanifolds $\tilde{L}$ of $X$ are special Lagrangian if and only if $\om|_{\tilde{L}}=0$ and $\Im\Om|_{\tilde{L}}=0$ which happens precisely when $F(\eta)=0$, so that $F^{-1}(0)$ parameterizes the special Lagrangian submanifolds of $X$ near $L$. We then showed that $F:L^p_{2+l,\ga}(B_{\ep'}(\nu_L))\to L^p_{1+l,\ga}(\La^2T^*L)\op L^p_{1+l,\ga}(\La^3T^*L)$ is a smooth map of Banach manifolds whose linearization at $0$ is given by $\d F(0)(\eta)=(\d\eta,*\d^*\eta)=(\d+*\d^*)(\eta)$ where $\d+*\d^*=(\d+*\d^*)^p_{2+l,\ga}:L^p_{2+l,\ga}(B_{\ep'}(\nu_L))\to L^p_{1+l,\ga}(\La^2T^*L)\op L^p_{1+l,\ga}(\La^3T^*L)$ is the asymptotically cylindrical linear elliptic operator.

Now, we want to use the Implicit Mapping Theorem for Banach Manifolds to show that $F^{-1}(0)$ is smooth, finite-dimensional and locally isomorphic to $\ker((\d+*\d^*)^p_{2+l,\ga})$. In particular we need to know when $(\d+*\d^*)^p_{2+l,\ga}$ is Fredholm. It turns out that as long as $\ga<0$ is such that $(0,\ga^2]$ contains neither eigenvalues of the Laplacian $\d^*\d$ on functions on $N$ nor eigenvalues of the Laplacian $\d\d^*$ on exact $1$-forms on $N$, then $(\d+*\d^*)^p_{2+l,\ga}$ will be Fredholm. Further, we are able to determine that $\ker((\d+*\d^*)^p_{2+l,\ga})$ is the vector space of small smooth harmonic $1$-forms on $L$ with dimension equal to the dimension of the image of the natural map $H^1_{cs}(L)\to H^1(L)$ given by $\chi\mapsto[\chi]$ and the cokernel $\coker((\d+*\d^*)^p_{2+l,\ga})\cong(\ker((\d^*+\d*)^q_{2+m,-\ga}))^*$ with $\frac{1}{p}+\frac{1}{q}=1$, $m\geq 1$ where $\ker((\d^*+\d*)^q_{2+m,\ga})$ consists of smooth coclosed $2$-forms and smooth harmonic $3$-forms.

Next, the image of $F$ actually lies in exact $2$- and exact $3$-forms which allows us to say further that the image of $F$ lies in the image of $(\d+*\d^*)^p_{2+l,\ga}$. From here, we apply the Implicit Mapping Theorem for Banach Spaces: define $\mathcal{A}=\ker((\d+*\d^*)^p_{2+l,\ga})$ and $\mathcal{B}$ the subspace of $L^p_{2+l,\ga}(T^*L)$ which is $L^2$-orthogonal to $\mathcal{A}$. Then $\mathcal{A}\op\mathcal{B}=L^p_{2+l,\ga}(T^*L))$ and $(\d+*\d^*)^p_{2+l,\ga}|_{\mathcal{B}}:\mathcal{B}\to\mathcal{C}$ is an isomorphism of vector spaces. Further, it is a homeomorphism of topological spaces by the Open Mapping Theorem. Now, let $\mathcal{U}$, $\mathcal{V}$ be open neighborhoods of $0$ in $\mathcal{A}$, $\mathcal{B}$ respectively such that $\mathcal{U}\t\mathcal{V}\subset L^p_{2+l,\ga}(B_{\ep'}(T^*L))$; then $F:\mathcal{U}\t\mathcal{V}\to\mathcal{C}$ is a smooth map of Banach manifolds, $F(0,0)=(0,0)$ and $\d F(0,0)=(\d+*\d^*)^p_{2+l,\ga}$. Thus, there exists a connected open neighborhood $\mathcal{U'}\subset\mathcal{U}$ of $0$ and a smooth map $G:\mathcal{U'}\to\mathcal{V}$ such that $G(0)=0$ and $F(x)=(x,G(x))\equiv(0,0)$ for all $x\in\mathcal{U'}$, so that near zero $F^{-1}(0,0)=\{(x,G(x)):x\in\mathcal{U'}\}$ is smooth, finite-dimensional and locally isomorphic to $\mathcal{A}=\ker((\d+*\d^*)^p_{2+l,\ga})$.

Finally, we proved that $F^{-1}(0,0)$ consists of smooth sections by proving a regularity result and then that $F^{-1}(0,0)$ is locally homeomorphic to $M^{\ga}_L$, the moduli space of special Lagrangian deformations of $L$, near $L$, in $X$. This completes the proof of the theorem.
\end{proof}

\section{Proof of Theorem \ref{mslthm1}}
Assume that $(X,\om,g,\Om)$, $\al$, $(X_0=M\times S^1\times\R,\om_0,g_0,\Om_0)$, $K\subset X$, $R$, $\Psi$, $L$, $\be$, $L_0=N\t\{p\}\t\R$, $K'\subset L$, $R'$, $v$, $\Phi$, $\nu_N$ and $\Th$ are as in the proof of Theorem \ref{sl1thm}.

We begin with a couple of rigidity results.

\begin{lem}
$N$ is special Lagrangian in $M$ if and only if $N\t\{p\}\t\R$ is special Lagrangian in $M\t S^1\t\R$.
\end{lem}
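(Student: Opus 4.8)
The plan is to verify both implications directly from the product structure in Definition \ref{acdfn1}, unwinding the conditions $\om_0|_{L_0}=0$ and $\Im\Om_0|_{L_0}=0$ on $L_0=N\t\{p\}\t\R$ in terms of the corresponding conditions $\ka_I|_N=0$ and $\Im(\ka_J+i\ka_K)|_N=0$ on $N$. Recall that on $N\t\{p\}\t\R$ the tangent space at each point splits as $T_xN\op 0\op\R\pd_t$, so that a $k$-form on $M\t S^1\t\R$ restricts to $N\t\{p\}\t\R$ by first restricting each factor. Since the $S^1$-direction is frozen at $p$, any form with a $\d\th$ factor pulls back to zero on $L_0$.

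First I would treat the Lagrangian condition. We have $\om_0=\ka_I+\d\th\w\d t$, and since $\d\th|_{L_0}=0$ (the $S^1$-coordinate is constant on $L_0$), the term $\d\th\w\d t$ restricts to zero on $L_0$; hence $\om_0|_{L_0}=\ka_I|_{N\t\{p\}\t\R}=\pi_M^*(\ka_I|_N)$ where $\pi_M$ is the projection to $M$. Therefore $\om_0|_{L_0}\equiv 0$ if and only if $\ka_I|_N\equiv 0$, i.e. $N$ is Lagrangian in $(M,\ka_I)$.

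Next I would treat the special condition. From $\Om_0=(\ka_J+i\ka_K)\w(\d\th+i\d t)$ we extract the imaginary part: $\Im\Om_0=\ka_J\w\d t+\ka_K\w\d\th$. Restricting to $L_0$, the term $\ka_K\w\d\th$ vanishes because $\d\th|_{L_0}=0$, so $\Im\Om_0|_{L_0}=(\ka_J|_N)\w\d t$, pulled back along the projections. Since $\d t$ is nowhere zero on the $\R$-factor and is linearly independent from the pullback of any form on $N$, we get $\Im\Om_0|_{L_0}\equiv 0$ if and only if $\ka_J|_N\equiv 0$. Combining with the previous paragraph: $L_0$ is special Lagrangian in $X_0$ iff $\ka_I|_N=\ka_J|_N=0$. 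It remains to observe that this pair of conditions is exactly the statement that $N$ is special Lagrangian in $M$: indeed $N$ is Lagrangian with respect to the K\"ahler form $\ka_I$ of $M$ precisely when $\ka_I|_N=0$, and on a Lagrangian $N$ the condition $\Im(\ka_J+i\ka_K)|_N=0$, i.e. $\ka_K|_N=0$, together with $\ka_I|_N=0$ forces $\ka_J|_N$ to be a volume form (it is the restriction of the calibration $\Re(\ka_J+i\ka_K)$); the hyperk\"ahler symmetry lets one equally well choose $\ka_J$ as the calibrating form, so that $N$ special Lagrangian in $(M,\ka_I,\ka_J+i\ka_K)$ is the same as $\ka_I|_N=\ka_J|_N=0$. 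I would cite Definition \ref{acdfn3} for the precise normalization being used.

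The argument is essentially a bookkeeping exercise, so there is no serious obstacle; the one point requiring a little care is matching conventions, namely which of $\ka_J,\ka_K$ plays the role of the "imaginary part" to be killed versus the "real part" that becomes the volume form. Depending on the sign/ordering conventions fixed in Definition \ref{acdfn3}, the roles of $\ka_J$ and $\ka_K$ in the computation above may swap, but because the hyperk\"ahler triple $(\ka_I,\ka_J,\ka_K)$ can be rotated, the condition "$N$ is special Lagrangian in $M$" is insensitive to this choice, and the equivalence holds either way. I would make this explicit by noting that special Lagrangian submanifolds of a hyperk\"ahler surface $M$ are exactly the complex curves for the complex structure $I$ rotated appropriately, equivalently the submanifolds on which two of the three K\"ahler forms $\ka_I,\ka_J,\ka_K$ vanish, which is precisely the pair of conditions produced by restricting $\om_0$ and $\Im\Om_0$ to $L_0$.
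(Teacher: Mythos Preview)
Your approach is exactly the paper's: restrict $\om_0$ and $\Im\Om_0$ to $N\t\{p\}\t\R$, kill the $\d\th$-terms, and read off the conditions on $N$. The Lagrangian half is fine and matches verbatim.

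There is, however, a genuine slip in your treatment of the special condition. You compute $\Im\Om_0=\ka_J\w\d t+\ka_K\w\d\th$ from the formula $\Om_0=(\ka_J+i\ka_K)\w(\d\th+i\d t)$ in Definition~\ref{acdfn1}, obtain $\Im\Om_0|_{L_0}=(\ka_J|_N)\w\d t$, and then assert that ``$N$ special Lagrangian in $(M,\ka_I,\ka_J+i\ka_K)$ is the same as $\ka_I|_N=\ka_J|_N=0$.'' That last assertion is false: with the holomorphic $(2,0)$-form fixed as $\ka_J+i\ka_K$, special Lagrangian means $\ka_I|_N=0$ and $\ka_K|_N=0$, while $\ka_J|_N$ is the induced volume form and is \emph{nowhere zero}. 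No appeal to ``hyperk\"ahler symmetry'' fixes this once the Calabi--Yau structure on $M$ has been specified; the condition ``two of $\ka_I,\ka_J,\ka_K$ vanish'' characterizes complex curves for \emph{some} complex structure in the hyperk\"ahler family, not special Lagrangians for the \emph{given} one.

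The correct resolution is the one you gestured at: check the normalization in Definition~\ref{acdfn3}. There (and in the paper's own proof) the convention is $\Om_0=(\ka_J+i\ka_K)\w(\d t+i\d\th)$, giving $\Im\Om_0=\ka_J\w\d\th+\ka_K\w\d t$; restricting then yields $(\ka_K|_N)\w\d t$, whose vanishing is exactly $\ka_K|_N=0$, i.e.\ the special condition for $N$. So the discrepancy is a typo-level inconsistency between Definitions~\ref{acdfn1} and~\ref{acdfn3}, not a mathematical symmetry, and you should simply adopt the latter convention rather than argue the choice is immaterial.
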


\begin{proof}
Assume that $N$ is special Lagrangian in $M$. First note that for any $p\in S^1$, $\{p\}\t\R$ is Lagrangian in $S^1\t\R$ since $(\d\th\w\d t)|_{\{p\}\t\R}\equiv 0$. Then $\om_0|_{N\t\{p\}\t\R}=(\kappa_I+\d\th\w\d t)|_{N\t\{p\}\t\R} =\ka_I|_N+(\d\th\w\d t)|_{\{p\}\t\R}\equiv 0$ since $N$ is Lagrangian in $M$ and $\{p\}\t\R$ is Lagrangian in $S^1\t\R$. Next, $\Im\Om_0=\Im[(\ka_J+i\ka_K)\w(\d t+i\d\th)]=\ka_J\w\d\th +\ka_K\w\d t$. Then $(\Im\Om_0)|_{N\t\{p\}\t\R}=(\ka_J\w\d\th)|_{N\t\{p\}} +(\ka_K\w\d t)|_{N\t\R}\equiv 0$. Thus we have that $N\t\{p\}\t\R$ is special Lagrangian in $M\t S^1\t\R$.

Conversely, assume that $N\t\{p\}\t\R$ is special Lagrangian in $M\t\ S^1\t\R$. Then $0\equiv(\ka_I+\d\th\w\d t)|_{N\t\{p\}\t\R}=\ka_I|_N$. Further, $0\equiv(\ka_J\w\d\th+\ka_K\w\d t)|_{N\t\{p\}\t\R}= \ka_K|_N\w\d t|_{\R}$. Since $\d t$ is the volume form on $\R$, we must have $\ka_K|_N\equiv 0$, thus proving that $N$ is special Lagrangian in $M$.
\end{proof}

\begin{lem}
If $L=N\t\{p\}\t[a,b]$ is special Lagrangian in $X=M\t S^1\t[a,b]$ for some $a,b\in\R$, then any special Lagrangian $\tilde{L}$ homologous to $L$ is of the form $\tilde{N}\t\{p\}\t[a,b]$ for some special Lagrangian submanifold $\tilde{N}$ of $M$.
\label{rigidity}
\end{lem}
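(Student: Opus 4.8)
The plan is to show that a special Lagrangian $\ti L$ homologous to $L=N\t\{p\}\t[a,b]$ in the product cylinder $X=M\t S^1\t[a,b]$ must be a product of the form $\ti N\t\{p\}\t[a,b]$, and that such $\ti N$ is automatically special Lagrangian in $M$ (the latter being the previous lemma). The key is to exploit the extra calibrated-geometry structure: on $X=M\t S^1\t[a,b]$ with $\om_0=\ka_I+\d\th\w\d t$ and $\Re\Om_0=\ka_J\w\d t-\ka_K\w\d\th$, the $3$-form $\Re\Om_0$ calibrates $\ti L$ (since $\ti L$ is special Lagrangian), so $\vol(\ti L)=\int_{\ti L}\Re\Om_0$. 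Because $\Re\Om_0$ is closed and $\ti L$ is homologous to $L$, Stokes' theorem gives $\int_{\ti L}\Re\Om_0=\int_L\Re\Om_0=\vol(L)$; but $L=N\t\{p\}\t[a,b]$ is itself calibrated by $\Re\Om_0$, so $L$ and $\ti L$ are \emph{both} volume-minimizers in their homology class. The strategy is then to produce a second calibration-type inequality, tailored to the product structure, that is saturated \emph{only} by cylinders over submanifolds of $M$ sitting over the slice $\{p\}$.

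First I would set up the comparison form. The form $\d\th$ on $S^1$ pulls back to a closed $1$-form on $X$; consider the closed $3$-form $\al:=\ka_I\w\d\th$ on $X$ (note $\ka_I$ is closed on the $K3$ surface $M$). At any point of any $3$-submanifold, $\al$ restricted to the tangent $3$-plane is bounded in absolute value by $\sqrt{2}$ times... — more precisely, one checks the pointwise comass-type bound: for a unit-volume oriented $3$-plane $P$ in $T_x X$ with $g_0=g_M+\d\th^2+\d t^2$, one has $|\al|_P|\le c$ for an explicit constant $c$, with equality forcing $P$ to be spanned by two vectors tangent to $M$ and the $\pd_\th$ direction, i.e. $P$ is "horizontal at level $t$ and transverse to $\pd_t$." Applying Stokes again, $\int_{\ti L}\al=\int_L\al$, and on $L=N\t\{p\}\t[a,b]$ this equals $0$ because $L$ has no $\pd_\th$-component in its tangent space. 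So $\int_{\ti L}\al=0$. Simultaneously the first-Pontryagin-type argument above pins $\vol(\ti L)=\vol(L)$. Combining the saturation of the $\Re\Om_0$-calibration with the vanishing of $\int_{\ti L}\al$, I would derive that at every point of $\ti L$ the tangent plane is both calibrated by $\Re\Om_0$ \emph{and} annihilated by $\d\th$, i.e. $T_x\ti L\subset T_x M\op\R\pd_t$ with the $\R\pd_t$ factor present; equivalently, $\d\th|_{\ti L}\equiv0$ and $\d t|_{\ti L}$ is nowhere zero. The first condition says $\ti L$ projects to a constant value $q\in S^1$ (since $\d\th$ exact on $\ti L$ plus $\ti L$ connected), and homology with $L$ forces $q=p$. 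The second says the projection $\ti L\to[a,b]$ is a submersion.

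With $\d\th|_{\ti L}\equiv0$ and $\ti L\subset M\t\{p\}\t[a,b]$, writing $\pi:M\t\{p\}\t[a,b]\to M$ and $\tau:M\t\{p\}\t[a,b]\to[a,b]$, the Lagrangian condition $\om_0|_{\ti L}=\ka_I|_{\ti L}=0$ says $\ti L$ is Lagrangian for the (degenerate) pullback of $\ka_I$; since $\tau|_{\ti L}$ is a submersion with interval fibers, $\ti L$ is diffeomorphic to $(\text{level set})\t[a,b]$, and I would show the level sets $\ti N_t:=\ti L\cap(M\t\{p\}\t\{t\})$ are all Lagrangian in $M$ and, because $\ti L$ is calibrated by $\ka_J\w\d t-\ka_K\w\d\th=\ka_J\w\d t$ on $\ti L$, each $\ti N_t$ is calibrated by $\ka_J$, hence special Lagrangian in $M$; finally a first-order ODE (the graph of $\ti L$ over $[a,b]$ satisfies a flow equation whose right-hand side vanishes by the special Lagrangian equations) forces $\ti N_t$ independent of $t$, giving $\ti L=\ti N\t\{p\}\t[a,b]$.

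The main obstacle I anticipate is the passage from "tangent planes are calibrated by $\Re\Om_0$ and killed by $\d\th$ at each point" to the rigid product structure — i.e. integrating the pointwise/infinitesimal information. Establishing the sharp pointwise comass bound for $\al=\ka_I\w\d\th$ with its equality case on a $K3\t S^1\t\R$ background, and then checking that the equality locus is exactly the horizontal-plus-$\pd_t$ planes, requires a careful linear-algebra computation using the hyperk\"ahler relations among $\ka_I,\ka_J,\ka_K$ on $M$; and showing the level sets do not vary with $t$ needs the observation that the normal variation of $\ti N_t$ in $M$ is governed by a $1$-form that is simultaneously closed and coclosed (harmonic) yet exact, hence zero. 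I would handle this last point by the same Hodge-theoretic argument already invoked in the sketch of Theorem \ref{sl1thm}, adapted to the compact manifold $N$, so it should go through cleanly once the product splitting of $\ti L$ is in hand.
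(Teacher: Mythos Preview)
Your auxiliary form $\al=\ka_I\w\d\th$ carries no information on Lagrangian submanifolds: since $\om_0|_{\ti L}=(\ka_I+\d\th\w\d t)|_{\ti L}=0$, one has $\ka_I|_{\ti L}=(\d t\w\d\th)|_{\ti L}$, and hence $\al|_{\ti L}=(\d t\w\d\th\w\d\th)|_{\ti L}\equiv 0$ pointwise. So the vanishing of $\int_{\ti L}\al$ is automatic for \emph{every} Lagrangian and cannot be combined with the $\Re\Om_0$-calibration to force $\d\th|_{\ti L}\equiv 0$. This is the load-bearing step of your plan, and without it the remainder (the product splitting, the ODE for the level sets, the Hodge argument) never gets started. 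To rescue the strategy you would need a different closed form whose restriction to special Lagrangian $3$-planes is sign-definite and vanishes exactly when $\d\th$ does; your proposal does not supply one, and the comass computation you flag as the main obstacle is therefore not the real difficulty.

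The paper's argument avoids all of this with a direct slicing comparison. After normalizing the length of $[a,b]$ to $1$, set $\ti L_t=\ti L\cap(M\t S^1\t\{t\})$ and use the pointwise inequality $\d vol_{\ti L}\ge\d vol_{\ti L_t}\w\d t$ (equality exactly when $\ti L$ is locally a product with the $t$-interval), together with $\vol(\ti L_t)\ge\vol(N)$ (since $N$ is calibrated and the slices are homologous to it). Chaining these with the equality $\vol(\ti L)=\vol(L)=\vol(N)$ that you already derived in your first paragraph forces both inequalities to saturate everywhere, yielding the product structure and the calibration of each slice in one stroke---no auxiliary form, no comass bound, no ODE.
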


\begin{proof}
Begin by normalizing the volume of $[a,b]$ to $1$. By the previous result $N$ is special Lagrangian. Let $\tilde{L}$ be any special Lagrangian of $X$ which is homologous to $L$. We begin with the following calculation.
\begin{equation*}
\begin{split}
\vol(\tilde{L})&=\int_{\tilde{L}}\d vol_{\tilde{L}}=\int_{\tilde{L}}\Re(\Om_0)\\
&=\int_{N\t\{p\}\t[a,b]}\Re(\Om_0)=\int_{[a,b]}\vol(N)\d t=\vol(N)\\
\end{split}
\end{equation*}

Now, let $\tilde{L}_t=\tilde{L}\cap(M\t S^1\t\{t\})$ for each $t\in[a,b]$, so that $\vol(\tilde{L}_t)\geq\vol(N)$ since $N$ is a calibrated submanifold of $M$ where equality happens if $\tilde{L}_t$ is also calibrated. Further, we always have $\d vol_{\tilde{L}}\geq\d vol_{\tilde{L}_t}\w\d t$ locally since $\tilde{L}_t\subset\tilde{L}$ which, by Fubini's Theorem, yields that $\vol(\tilde{L})\geq\int_{[a,b]}\vol(\tilde{L}_t)\d t$ where equality occurs precisely when $\tilde{L}$ is a product with $[a,b]$. Finally, we see
\begin{equation*}
\begin{split}
\vol(N)=\vol(\tilde{L})&\geq\int_{[a,b]}\vol(\tilde{L}_t)\d t\\
&\geq\int_{[a,b]}\vol(N)\d t=\vol(N).\\
\end{split}
\end{equation*}
\end{proof}

The next step will be to parameterize the special Lagrangian submanifolds near $N$ in $M$. Recall that $N$ is compact, so the moduli space of special Lagrangian deformations of $N$ near $N$ in $M$ can be identified with the harmonic $1$-forms on $N$ which, again since $N$ is compact, can be identified with $H^1(N,\R)$, the first de Rham cohomology group. This, however, is just a direct sum of $b^1(N)$ copies of $\R$, $b^1(N)$ the first betti number of $N$. For the moment then we can let $\U$ be an open, connected, simply-connected subset of $\R^d$ for some $d\leq b^1(N)$ containing $0$. Now $N=N_0$ can be identified with $0\in\U$, and we can talk about the special Lagrangian submanifolds $N_s$ close to $N_0$ for $s$ close to $0$ in $\U$. (We will be able to give a more concrete description of $\U$ once we have connected all of this to the geometry of $L$.)

Recall that since $N$ is special Lagrangian there is an identification of a tubular neighborhood $T_N$ of $N$ with $1$-forms on $N$. Identifying $N$ with the zero section of $T^*N$, we know that $N_s$ can now be identified with the graph of some harmonic $1$-form $\ze_s$ on $N$. Now, for each $s$, extend $\ze_s$ smoothly to a $1$-form $\si_s$ on $L$ which is asymptotic to $\ze_s$. Since $L$ is special Lagrangian there exists a similar identification of $T_L$ and $T^*L$, so that the graph of $\si_s$ now corresponds to a $3$-submanifold $L_s$ of $X$ asymptotic to $N_s\t\{p\}\t\R$.

Notice that for each $s$ we have chosen a section $\si_s$, so that we need to show the independence of this choice on the problem. What we are interested in are the values of $\om$ and $\Im\Om$ when restricted to the submanifolds $L_s$, so we need only show independence of the choice of $L_s$ on these values. Note that when $L_s$ is asymptotic to $N_s\t\{p\}\t (R,\infty)$, then near infinity $\om|_{L_s}\equiv O(e^{\ga t})$ and $(\Im\Om)|_{L_s}\equiv O(e^{\ga t})$. On the other hand, since $N_s$ is a special Lagrangian, by the long exact sequence below, (3),  $\om|_{L_s}$, $(\Im\Om)|_{L_s}$ are compactly supported; that is, we can take $[\om|_{L_s}]\in H^2_{cs}(L_s,\R)$, $[(\Im\Om)|_{L_s}]\in H^3_{cs}(L_s,\R)$. Moreover, since $N_0$ and $N_s$ are isotopic we can take $[\om|_{L_s}]\in H^2_{cs}(L_0,\R)$ and $[(\Im\Om)|_{L_s}]\in H^3_{cs}(L_0,\R)$ which are independent of the choice of $L_s$; by Lemma \ref{rigidity}, we in fact have that each $L_s$ is asymptotically cylindrical special Lagrangian, asymptotic to $N_s\t\{p\}\t\R$ for some small decay rate.

Consider the linear map $\Up:H^1(N)\to H^2_{cs}(L)$ coming from the long exact cohomological sequence:
\begin{equation}
\begin{split}
0\to H^0(L)\to H^0(N)\to H^1_{cs}(L)\to H^1(L)\to &H^1(N)\\
&\downarrow\\
0\leftarrow H^3_{cs}(L)\leftarrow H^2(N)\leftarrow H^2(L)\leftarrow &H^2_{cs}(L)
\end{split}
\label{les}
\end{equation}
$\Up$ is defined explicitly as follows: let $\eta$ be a closed $1$-form on $N$. Extend $\eta$ to a $1$-form $\tilde{\eta}$ on $L$ asymptotic to $\eta$. Then $\d\tilde{\eta}$ is a closed, compactly-supported $2$-form on $L$. We let $\Up([\eta])=[\d\tilde{\eta}]$. From here, notice that $[\d\tilde{\eta}]=0$ if and only if $\tilde{\eta}$ is a compactly-supported $1$-form, that is, if and only if $\tilde{\eta}$ corresponds to an asymptotically cylindrical special Lagrangian $3$-fold near $L$ in $X$. Thus, we define $\U$ to be an open, connected, simply-connected subset of $\ker\Up$.

Recall from the proof of Theorem \ref{sl1thm} we had the map $\Th:B_{\ep'}(T^*L)\to T_L\subset X$, simply the explicit identification of small sections of $T^*L$ with the tubular neighborhood $T_L$ which is compatible with our asymptotic identifications. Now, we can assume without loss of generality that for $\eta\in L^p_{2+l,\ga}(B_{\ep'}(T^*L)$ and $\si_s$ defined above we have $\eta+\si_s:L\to B_{\ep'}(T^*L)$, that is, a smooth section of the bundle $B_{\ep'}(T^*L)$, so that $\Th\circ(\eta+\si_s):L\to T_L$ makes sense. Thus we can define our deformation map as $$G:\U\t L^p_{2+l,\ga}(B_{\ep'}(T^*L))\to\{2\text{-forms on }L\}\op \{3\text{-forms on }L\}$$ by $$G(s,\eta)=((\Th\circ(\eta+\si_s))^*(-\om),(\Th\circ(\eta+\si_s))^*(\Im\Om)).$$ Note that this map is equivalent to the one defined by $$G(s,\eta)=(\pi_*(-\om|_{\Ga(\eta+\si_s)}),\pi_*((\Im\Om)|_{\Ga(\eta+\si_s)}))$$ where $\pi$ is the projection onto $L$ and $\Ga(\eta+\si_s)$ is the graph of $\Ga(\eta+\si_s)$ in $X$, that is, a $3$-submanifold $\tilde{L}$ of $X$. Finally, notice that $\tilde{L}$ is special Lagrangian if and only if $G(s,\eta)=(0,0)$ so that $G^{-1}(0,0)$ parameterizes the special Lagrangian submanifolds near $L$ in $X$.

We now need to show that $G^{-1}(0,0)$ is smooth, finite-dimensional and locally isomorphic to $\ker((\d+*\d^*)^p_{2+l,\ga})\subset\U\t L^p_{2+l,\ga}(T^*L)$. This is accomplished in much that same way as in the fixed boundary case. We show that $G$ is a smooth map when considered as a map between weighted Sobolev spaces, prove that the image of $G$ lies in the image of the asymptotically cylindrical linear elliptic operator $(\d+*\d^*)^p_{2+l,\ga}$ which will then allow us to use the Implicit Mapping Theorem for Banach Manifolds.

\begin{prop}
$G:\U\t L^p_{2+l,\ga}(B_{\ep'}(T^*L))\to L^p_{1+l,\ga}(\La^2T^*L)\op L^p_{1+l,\ga}(\La^3T^*L)$ is a smooth map of Banach manifolds with linearization at $0$ given by $\d G_{(0,0)}(s,\eta)=(\d(\eta+\si_s),*\d^*(\eta+\si_s))$.
\label{Gmap}
\end{prop}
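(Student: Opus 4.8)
The plan is to follow the template of the fixed–boundary argument from Theorem \ref{sl1thm}, treating $G$ as essentially the old deformation map $F$ pre-composed with the affine shift $\eta\mapsto\eta+\si_s$ and then checking that the extra parameter $s$ does not destroy smoothness. First I would verify that $G$ is well defined as a map of the stated weighted Sobolev spaces: since the $\ze_s$ are harmonic $1$-forms on the compact manifold $N$ varying smoothly with $s$, and the $\si_s$ are smooth extensions asymptotic to $\ze_s$, the difference $\si_s-\si_0$ is a smooth $1$-form whose asymptotic limit $\ze_s-\ze_0$ is itself a smooth family; by shrinking $\U$ we may assume $\eta+\si_s$ lands in the fibrewise ball $B_{\ep'}(T^*L)$ for all $(s,\eta)$ in a neighborhood of $(0,0)$, so $\Th\circ(\eta+\si_s)$ is a legitimate section and $G(s,\eta)$ consists of genuine $2$- and $3$-forms on $L$. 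The pullback $(\Th\circ(\eta+\si_s))^*(-\om)$ and $(\Th\circ(\eta+\si_s))^*(\Im\Om)$ are pointwise algebraic expressions in $\eta+\si_s$ and its first derivative with coefficients built from $\om,\Om,\Th$; the same estimates used in \cite{SaAJ} to show $F$ maps into $L^p_{1+l,\ga}$ apply verbatim once one notes $\eta+\si_s$ and $\na(\eta+\si_s)$ decay at rate $\ga$ (indeed $\si_s$ is asymptotic to the harmonic form $\ze_s$, so its deviation from a cylindrical limit decays, and the construction arranges the relevant pulled-back forms to be $O(e^{\ga t})$).

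Next I would establish smoothness of $G$ as a map of Banach manifolds. The key observation is that $G$ factors as $(s,\eta)\mapsto \eta+\si_s\mapsto (\text{pullbacks})$, where the first arrow is affine in $\eta$ and depends smoothly on $s$ (because $s\mapsto\ze_s$ is a smooth map into the finite-dimensional space of harmonic $1$-forms, hence $s\mapsto\si_s$ can be chosen smooth into $L^p_{2+l,\ga}$ modulo a fixed smooth section), and the second arrow is exactly the nonlinear Nemytskii-type operator already shown smooth in the proof of Theorem \ref{sl1thm}. So smoothness of $G$ reduces to smoothness of $F$ (known) composed with a smooth affine map (elementary), using that composition of smooth maps of Banach manifolds is smooth and that $L^p_{2+l,\ga}\hookrightarrow C^1$ by Theorem \ref{embedding}(2) with $p>3$, which is what makes the pointwise nonlinearity act smoothly.

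Finally I would compute the linearization at $(0,0)$. Since $G(s,\eta)=F(\eta+\si_s)$ with $F$ the old deformation map and $\si_0=0$, the chain rule gives $\d G_{(0,0)}(s,\eta)=\d F_0(\eta+\dot\si_s)$ where $\dot\si_s$ denotes the derivative of $s\mapsto\si_s$ at $0$ in the direction $s$; writing $\si_s$ for this first-order term abusively (as the statement does), and recalling from the sketch of Theorem \ref{sl1thm} that $\d F(0)(\chi)=(\d\chi,*\d^*\chi)$, we obtain $\d G_{(0,0)}(s,\eta)=(\d(\eta+\si_s),*\d^*(\eta+\si_s))$, as claimed. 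I expect the main obstacle to be purely bookkeeping rather than conceptual: one must check carefully that the family $\si_s$ can be chosen to depend smoothly on $s$ as a map into the weighted space $L^p_{2+l,\ga}(T^*L)$ (not merely continuously), and that the affine shift genuinely stays inside the domain $B_{\ep'}(T^*L)$ after shrinking $\U$ — both are routine given the finite-dimensionality of the space of harmonic forms on the compact $N$, but they are the places where the "moving boundary" genuinely differs from the fixed case and so deserve explicit verification.
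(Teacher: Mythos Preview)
Your factorization $G(s,\eta)=F(\eta+\si_s)$ and the chain-rule computation of the linearization are fine, but there is a genuine gap in your argument that $G$ lands in the weighted target $L^p_{1+l,\ga}$. You claim that ``$\eta+\si_s$ and $\na(\eta+\si_s)$ decay at rate $\ga$,'' and that therefore the estimates from \cite{SaAJ} apply verbatim. This is false: $\si_s$ is asymptotic to the \emph{nonzero} harmonic form $\ze_s$ on $N$, so $\si_s$ (and hence $\eta+\si_s$) does not decay at all for $s\neq 0$ --- it converges to a translation-invariant limit. In particular $\eta+\si_s\notin L^p_{2+l,\ga}(T^*L)$ for $\ga<0$, so it is not in the domain on which the old $F$ was shown to take values in $L^p_{1+l,\ga}$, and the fixed-boundary estimates do not transfer ``verbatim.'' Your parenthetical remark (``its deviation from a cylindrical limit decays'') in fact contradicts your own claim, and the final clause (``the construction arranges the relevant pulled-back forms to be $O(e^{\ga t})$'') is exactly the point that needs proof, not something one can assert.

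The mechanism the paper uses, which your proposal is missing, is this: the decay of $G(s,0)$ has nothing to do with decay of $\si_s$. Near infinity the graph $\Ga(\si_s)$ coincides with $N_s\times\{p\}\times(R+1,\infty)$, and because $N_s$ is itself \emph{special Lagrangian} in $M$, the cylindrical forms $\om_0$ and $\Im\Om_0$ vanish identically on this cylinder. Hence $\om|_{\Ga(\si_s)}=(\om_0+O(e^{\al t}))|_{\Ga(\si_s)}=O(e^{\al t})$ and similarly for $\Im\Om$; since $\al<\ga$ this gives the required $O(e^{\ga t})$ decay and places $G(s,0)$ in $L^p_{1+l,\ga}$. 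This is precisely where the hypothesis that the moving boundary $N_s$ remains special Lagrangian (rather than being an arbitrary nearby surface) is used, and it is the one genuinely new ingredient beyond the fixed-boundary case. Once $G(s,0)\in L^p_{1+l,\ga}$ is established this way, your composition/Nemytskii argument handles the $\eta$-dependence and the linearization as you describe.
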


\begin{proof}
The functional form of $G$ is $$G(s,\eta)|_x=H(s,x,\eta|_x, \na\eta|_x),\text{ }x\in L$$ where $H$ is a smooth function of its arguments. Since $p>3$ and $l\geq 1$ by Sobolev embedding theorem we have $L^p_{2+l,\ga}(T^*L)\hookrightarrow C^1_{\ga}(T^*L)$. General arguments then show that locally $G(s,\eta)$ is $L^p_{1+l}$.

From \cite{SaAJ}, we know that $G(0,\eta)$ is in $L^p_{1+l,\ga}(\La^2T^*L)\op L^p_{1+l,\ga}(\La^3T^*L)$. When $s\neq 0$, then $G(s,0)=(\pi_*(-\om|_{\Ga(\si_s)}), \pi_*((\Im\Om)|_{\Ga(\si_s)}))$. By construction $\Ga(\si_s)$ is asymptotic to $N_s\t\{p\}\t\R$ which is special Lagrangian in $M\t S^1\t\R$ as $N_s$ is a special Lagrangian $2$-fold. Then $\om$ and $\Im\Om$ on $X\setminus K$ can be written as $\om=\ka_I+\d\th\w\d t+O(e^{\al t})$ and $\Im\Om=\Im[(\ka_J+i\ka_K)\w(\d\th+i\d t)]+O(e^{\al t})$ where $\al$ is the rate for $X$ converging to $M\t S^1\t\R$. $\Ga(\si_s)$ is the graph of $\si_s$ which is equal to $N_s\t\{p\}\t(R+1,\infty)$ in $M\t S^1\t(R+1,\infty)$. Since $N_s$ is special Lagrangian, for $t>R+1$, $\om|_{\Ga(\si_s)}=(\ka_I+\d\th\w\d t+O(e^{\al t}))|_{N_s\t\{p\}\t(R+1,\infty)}=0+O(e^{\al t})|_{N_s\t\{p\}\t(R+1,\infty)}$ and $(\Im\Om)|_{\Ga(\si_s)}=(\Im[(\ka_J+i\ka_K)\w(\d\th+i\d t)]+O(e^{\al t}))|_{N_s\t\{p\}\t(R+1,\infty)}=0+O(e^{\al t})|_{N_s\t\{p\}\t(R+1,\infty)}$.

Therefore for $\Ga(\si_s)$ the error terms $\om|_{\Ga(\si_s)}$ and $(\Im\Om)|_{\Ga(\si_s)}$ come from the degree of the asymptotic decay. In particular, as $\al<\ga$ we can assume $\om|_{\Ga(\si_s)}\equiv O(e^{\ga t})$ and $(\Im\Om)|_{\Ga(\si_s)}\equiv O(e^{\ga t})$. We can easily arrange to choose $\si_s$ such that $\pi_*(\om|_{\Ga(\si_s)})\in L^p_{1+l,\ga}(\La^2T^*L)$ and $\pi_*((\Im\Om)|_{\Ga(\si_s)})\in L^p_{1+l,\ga}(\La^3T^*L)$ and that $\Vert \pi_*(\om|_{\Ga(\si_s)}) \Vert_{L^p_{1+l,\ga}}\leq c_1|s|$ and $\Vert \pi_*((\Im\Om)|_{\Ga(\si_s)}) \Vert_{L^p_{1+l,\ga}}\leq c_2|s|$ for some constants $c_1$ and $c_2$. This implies that $G(s,\eta)\in L^p_{1+l,\ga}(\La^2T^*L)\op L^p_{1+l,\ga}(\La^3T^*L)$.

The linearization of $G$ is $\d+*\d^*$ by the same calculation as before since we are still only working locally.
\end{proof}

\begin{prop}
The image of $G$ lies in exact $2$-forms and exact $3$-forms; specifically,
\begin{equation*}
\begin{split}
G(\U\t L^p_{2+l,\ga}(B_{\ep'}(T^*L)))&\subset \d(L^p_{1+l,\ga}(T^*L))\op\d(L^p_{1+l,\ga}(\La^2T^*L))\\
&\subset L^p_{l,\ga}(\La^2T^*L)\op L^p_{l,\ga}(\La^3T^*L).
\end{split}
\end{equation*}
\end{prop}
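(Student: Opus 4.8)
The plan is to exploit that $\om$ and $\Im\Om$ are closed on $X$ --- $\om$ by the K\"ahler condition, $\Im\Om$ because $\Om$ is the holomorphic volume form of a Calabi--Yau structure --- together with the definition of $\Up$ and the hypothesis $\U\subseteq\ker\Up$. First I would note that, since pullback commutes with $\d$, for every $s\in\U$ and $\eta\in L^p_{2+l,\ga}(B_{\ep'}(T^*L))$ the two components $\pi_*(-\om|_{\Ga(\eta+\si_s)})$ and $\pi_*((\Im\Om)|_{\Ga(\eta+\si_s)})$ of $G(s,\eta)$ are \emph{closed} $2$- and $3$-forms on $L$; by Proposition~\ref{Gmap} they lie in $L^p_{1+l,\ga}(\La^2T^*L)$ and $L^p_{1+l,\ga}(\La^3T^*L)$, so in particular they decay at rate $\ga$ (as in that proof, $\Ga(\eta+\si_s)$ is asymptotic to the special Lagrangian $N_s\t\{p\}\t\R$, on which $\om$ and $\Im\Om$ restrict to zero up to an $O(e^{\ga t})$ error). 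Thus everything reduces to two claims: (a) a closed form in $L^p_{1+l,\ga}(\La^kT^*L)$, $k=2,3$, with vanishing class in $H^k_{cs}(L,\R)$ is $\d$ of a form in $L^p_{1+l,\ga}(\La^{k-1}T^*L)$; and (b) our two forms do have vanishing compactly supported class.

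For (b) --- the heart of the matter --- I would argue by an isotopy. Identifying $\U\subseteq\ker\Up\subseteq H^1(N)$ with a ball of harmonic forms makes $\ze_s$ depend linearly on $s$, so I may take $\si_s$ linear in $s$; then $\{\Ga(\tau(\eta+\si_s))\}_{\tau\in[0,1]}$ is a smooth isotopy from $L$ (the zero section) to $\Ga(\eta+\si_s)$ through submanifolds each asymptotic to the special Lagrangian $N_{\tau s}\t\{p\}\t\R$. Hence $\om$ and $\Im\Om$ restrict along each $\Ga(\tau(\eta+\si_s))$ to closed decaying forms, defining classes $c_1(\tau)\in H^2_{cs}(L)$ and $c_2(\tau)\in H^3_{cs}(L)$; writing $W_\tau$ for the variation field and using $\d\om=\d\,\Im\Om=0$, Cartan's formula gives $\dot c_1(\tau)=[\d\,\pi_*(\iota_{W_\tau}\om)]_{cs}$ and $\dot c_2(\tau)=[\d\,\pi_*(\iota_{W_\tau}\Im\Om)]_{cs}$. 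Near infinity $W_\tau$ is the normal field in $M$ realising the deformation $\ze_s$ of $N_{\tau s}$, so $\pi_*(\iota_{W_\tau}\om)$ is asymptotic to the closed $1$-form on $N$ representing $\ze_s$; by the very definition of $\Up$ this gives $\dot c_1(\tau)=\Up([\ze_s])=0$, since $s\in\ker\Up$. On the other hand $\pi_*(\iota_{W_\tau}\Im\Om)$ is asymptotic to a $2$-form $\mu\w\d t$ with $\mu$ a closed $1$-form on $N$, whose restriction to every cross-section $N\t\{t=\mathrm{const}\}$ vanishes because of the $\d t$-factor; so Stokes' theorem forces $\int_L\d\,\pi_*(\iota_{W_\tau}\Im\Om)=0$, i.e.\ $\dot c_2(\tau)=0$ in $H^3_{cs}(L)\cong\R$. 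Therefore $c_1(1)=c_1(0)=[\om|_L]_{cs}=0$ and $c_2(1)=c_2(0)=[(\Im\Om)|_L]_{cs}=0$, since $L$ is special Lagrangian.

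Claim (a) is a weighted Poincar\'e lemma: on the cylindrical end, decomposing a closed decaying form into its $N$-part and $\d t$-part and integrating the latter from $t=\iy$ produces a primitive decaying at rate $\ga$ (this needs only $\ga<0$); cutting it off reduces matters to a genuinely compactly supported closed form of trivial class, which has a compactly supported primitive in $L^p_{1+l,\ga}$ by ordinary Hodge theory on a compact exhaustion of $L$, and the sum of the two pieces is the required primitive. Combining (a) and (b) yields $\pi_*(-\om|_{\Ga(\eta+\si_s)})\in\d(L^p_{1+l,\ga}(T^*L))$ and $\pi_*((\Im\Om)|_{\Ga(\eta+\si_s)})\in\d(L^p_{1+l,\ga}(\La^2T^*L))$, which is the assertion. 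The step I expect to be the main obstacle is the cohomological bookkeeping in (b): tracking the compactly supported class under a \emph{boundary-moving} isotopy. This is the genuinely new ingredient relative to the fixed-boundary case of \cite{SaAJ} (there the asymptotic variation vanishes and no correction term arises), and it is exactly the point that forces the restriction of $s$ to the subspace $\ker\Up$.
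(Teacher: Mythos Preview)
Your approach is correct but takes a genuinely different route from the paper's. The paper does not track compactly supported classes along an isotopy; it argues directly on the tubular neighbourhood $T_L$. Since $T_L$ retracts onto $L$ and $\om|_L=\Im\Om|_L=0$, one can choose primitives $\tau_1\in C^\infty(T^*T_L)$, $\tau_2\in C^\infty(\La^2T^*T_L)$ with $\om|_{T_L}=\d\tau_1$, $\Im\Om|_{T_L}=\d\tau_2$, vanishing along $L$ and decaying at rate $O(e^{\be t})$ to translation-invariant forms on $T_N\t\{p\}\t\R$. Then
\[
G(s,\eta)=\bigl(\d(\Th\circ(\eta+\si_s))^*(-\tau_1),\ \d(\Th\circ(\eta+\si_s))^*(\tau_2)\bigr),
\]
so exactness is immediate from $\d$ commuting with pullback. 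No Poincar\'e lemma, no isotopy, no explicit appeal to $\ker\Up$.

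What your route buys is transparency about where the hypothesis $\U\subseteq\ker\Up$ actually enters. In the paper's argument one still has to check that the pulled-back primitives $(\Th\circ(\eta+\si_s))^*(\tau_i)$ land in $L^p_{1+l,\ga}$; for $s\neq 0$ these are asymptotic to the generally nonzero restrictions of the translation-invariant limits to $N_s$, and correcting by a closed form on $L$ with that asymptotic behaviour is precisely the step that needs $[\ze_s]\in\ker\Up$---a point the paper glosses over by saying it ``need only modify slightly'' the fixed-boundary proof. Your isotopy and Cartan-formula computation $\dot c_1(\tau)=\Up([\ze_s])$ is exactly this verification made explicit. One small thing to tighten: in (a) and (b) you speak of the class in $H^k_{cs}(L)$ of a form that is only decaying, not compactly supported; you should say once that the integration-from-infinity manoeuvre in (a) first produces a canonical compactly supported representative, and only then invoke the long exact sequence and the definition of $\Up$.
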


\begin{proof}
We need only modify slightly the proof from \cite{SaAJ}. Recall that $\om$ and $\Im\Om$ are closed forms, so they determine the de Rham cohomology classes $[\om]$ and $[\Im\Om]$. In particular, $\om|_L\equiv 0$ and $\Im\Om|_L\equiv 0$ since $L$ is special Lagrangian, so $[\om|_L]=0$ and $[\Im\Om|_L]=0$; moreover, since $T_L$, the tubular neighborhood of $L$ from above, retracts onto $L$, $[\om|_{T_L}]=[\om|_L]$ and $[\Im\Om|_{T_L}]=[\Im\Om|_L]$. Thus, there exists $\tau_1\in C^{\infty}(T^*T_L)$ such that $\om|_{T_L}=\d\tau_1$ and $\tau_2\in C^{\infty}(\La^2T^*T_L)$ such that $\Im\Om|_{T_L}=\d\tau_2$. Now, since $\om|_{L}\equiv 0$, we can assume that $\tau_1|_L\equiv 0$; second, because $\om$ and all its derivatives decay at a rate $O(e^{\be t})$ to the translation-invariant $2$-form $\om_0$ on $M\t S^1\t\R$, we can assume that $\tau_1$ and all its derivatives decay at a rate $O(e^{\be t})$ to a translation invariant $1$-form on $T_N\t\{p\}\t\R$. We can make similar assumptions regarding $\tau_2$ based on the properties of $\Im\Om$. Thus, we have $\tau_1\in L^p_{1+l,\ga}(T^*L)$ and $\tau_2\in L^p_{1+l,\ga}(\La^2T^*L)$.

By Proposition \ref{Gmap}, the map $G$ maps $\U\t L^p_{2+l,\ga}(B_{\ep'}(T^*L))\to L^p_{1+l,\ga}(\La^2T^*L)\op L^p_{1+l,\ga}(\La^3T^*L)$. Thus for $(s,\eta)\in \U\t L^p_{2+l,\ga}(T^*L)$ we have:
\begin{equation*}
\begin{split}
G(s,\eta)&=((\Th\circ(\eta+\si_s))^*(-\om),(\Th\circ(\eta+\si_s))^*(\Im\Om))\\
&=((\Th\circ(\eta+\si_s))^*(-\d\tau_1),(\Th\circ(\eta+\si_s))^*(\d\tau_2))\\
&=(\d(\Th\circ(\eta+\si_s))^*(-\tau_1),\d(\Th\circ(\eta+\si_s))^*(\tau_2)).
\end{split}
\end{equation*}
\end{proof}

\begin{prop}
Let $\mathcal{C}$ denote the image of the operator $(\d+*\d^*)^p_{2+l,\ga}$. Then $$G:\U\t L^p_{2+l,\ga}(B_{\ep'}(T^*L))\to\mathcal{C}.$$
\label{image}
\end{prop}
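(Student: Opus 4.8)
The plan is to derive this from the preceding Proposition and the Fredholm theory of Section~3, exactly as in the corresponding step of the proof of Theorem~\ref{sl1thm} in \cite{SaAJ}. Since $\ga<0$ is chosen so that $(0,\ga^2]$ contains no eigenvalue of the Laplacian on functions on $N$ and none of the Laplacian on exact $1$-forms on $N$, the operator $(\d+*\d^*)^p_{2+l,\ga}$ is Fredholm by Theorem~\ref{fred}, so its image $\mathcal{C}$ is closed. It therefore suffices to show that every element of the image of $G$ annihilates $\coker((\d+*\d^*)^p_{2+l,\ga})\cong\bigl(\ker((\d^*+\d*)^q_{2+m,-\ga})\bigr)^*$, where $\frac1p+\frac1q=1$ and $m\ge1$; i.e.\ that it is $L^2$-orthogonal to $\ker((\d^*+\d*)^q_{2+m,-\ga})$. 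As recalled in the sketch of Theorem~\ref{sl1thm}, this kernel is a finite-dimensional space of smooth pairs $(\psi,\chi)$ with $\psi$ a coclosed $2$-form and $\chi$ a harmonic $3$-form, each lying in $L^q_{2+m,-\ga}$; since $L$ is connected, each such $\chi$ equals $c\,\d vol_L$ for a constant $c$.

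By the preceding Proposition, a general element of the image of $G$ is $(\d a,\d b)$ with $a\in L^p_{1+l,\ga}(T^*L)$ and $b\in L^p_{1+l,\ga}(\La^2T^*L)$, and I would treat the two pairings separately. For the $3$-form factor, $\langle\d b,c\,\d vol_L\rangle_{L^2}=c\int_L\d b$ vanishes by Stokes' Theorem: by Theorem~\ref{embedding}(2), $b\in L^p_{1+l,\ga}(\La^2T^*L)\hookrightarrow C^0_\ga(\La^2T^*L)$, so $b$ decays exponentially on every end (recall $\ga<0$) and there is no contribution at infinity. For the $2$-form factor, integrating by parts and using that $\psi$ is coclosed, hence $\d{*}\psi=0$, identifies $\langle\d a,\psi\rangle_{L^2}$ with $\int_L\d(a\w{*}\psi)$, which by Stokes' Theorem equals $\lim_{t\to\infty}\int_{\{\rho=t\}}a\w{*}\psi$, an integral over the compact cross-section $\{\rho=t\}\subset L$.

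The hard part is showing this boundary term vanishes: $a$ decays at the negative rate $\ga$ (again $a\in C^0_\ga(T^*L)$ by Theorem~\ref{embedding}(2)), whereas $\psi$ lies only in $L^q_{2+m,-\ga}$ with $-\ga>0$, so \emph{a priori} it could grow. The point is that, as noted before Lemma~\ref{ker}, $\ker((\d^*+\d*)^q_{2+m,w})$ is invariant under changes of $w$ within a connected component of $\R\sm\D_{(\d^*+\d*)_0}$, while the eigenvalue hypothesis on $\ga$ ensures that $\D_{(\d^*+\d*)_0}$ contains no point of $(0,\md\ga\,]$; hence the whole interval $(0,\md\ga)$ lies in the component of $\R\sm\D_{(\d^*+\d*)_0}$ containing $-\ga$, so that $\psi\in L^q_{2+m,w}\hookrightarrow C^0_w$ for every $w\in(0,\md\ga)$, i.e.\ $\md\psi=O(e^{wt})$. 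Fixing any such $w$ gives $\md{a\w{*}\psi}=O(e^{(\ga+w)t})\to0$ as $t\to\infty$, so the boundary term vanishes and $\langle\d a,\psi\rangle_{L^2}=0$. Thus $(\d a,\d b)$ annihilates the cokernel and lies in $\mathcal{C}$. The moving boundary enters only through the shifts $\si_s$, whose sole role here is to ensure---as in Proposition~\ref{Gmap}---that the primitives $a$ and $b$ lie in the stated weighted spaces; granting that, the argument is identical to \cite{SaAJ}.
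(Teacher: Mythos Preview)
Your argument is correct and follows the same strategy as the paper: write $G(s,\eta)=(\d\tau_1,\d\tau_2)$ via the preceding proposition, then pair against $(\chi_1,\chi_2)\in\ker((\d^*+\d*)^q_{2+m,-\ga})$ and use that each $\chi_i$ is coclosed. The paper dispatches the pairing in a single line, $\langle\d\tau_i,\chi_i\rangle_{L^2}=\langle\tau_i,\d^*\chi_i\rangle_{L^2}=0$, relying implicitly on the weighted duality $(L^p_{l,\ga})^*\cong L^q_{-l,-\ga}$ and density of compactly supported forms rather than on your explicit weight-improvement for $\psi$; your extra care with the boundary contribution at infinity is valid but not needed once that duality is in place.
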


\begin{proof}
Let $(s,\eta)\in\U\t L^p_{2+l,\ga}(B_{\ep'}(T^*L))$. From our previous work \cite[Theorem 3.10]{SaAJ}, $\coker((\d+*\d^*)^p_{2+l,\ga})\cong(\ker((\d^*+\d*)^q_{2+m,-\ga}))^*$ with $\frac{1}{p}+\frac{1}{q}=1$ and $m\geq 1$, so $G(s,\eta)\in\mathcal{C}$ if and only if $$\langle G(s,\eta),(\chi_1,\chi_2)\rangle_{L^2}\equiv0\text{ for all }(\chi_1,\chi_2)\in\ker((\d^*+\d*)^q_{2+m,-\ga}).$$ By the previous proposition $G(s,\eta)=(\d\tau_1,\d\tau_2)$ for some $\tau_1\in L^p_{1+l,\ga}(T^*L)$ and $\tau_2\in L^p_{1+l,\ga}(\La^2T^*L)$; then \cite[Theorem 3.10]{SaAJ} shows $\chi_1$ and $\chi_2$ are coclosed $2$- and $3$-forms respectively which yields the following:
\begin{equation*}
\begin{split}
\langle G(s,\eta),(\chi_1,\chi_2)\rangle_{L^2}&=\langle \d\tau_1,\chi_1\rangle_{L^2}+\langle\d\tau_2,\chi_2\rangle_{L^2}\\
&=\langle \tau_1,\d^*\chi_1\rangle_{L^2}+\langle\tau_2,\d^*\chi_2\rangle_{L^2}=0.
\end{split}
\end{equation*}
\end{proof}

We are now able to invoke to the Implicit Mapping Theorem for Banach Spaces to conclude that $G^{-1}(0,0)$ is smooth, finite-dimensional and locally isomorphic to $\ker((\d+*\d^*)^p_{2+l,\ga})\subset\U\t L^p_{2+l,\ga}(T^*L)$ as desired. The final step is to calculate the dimension of the moduli space which we do by using Fredholm index arguments. (The rest of the proof of Theorem \ref{mslthm1} will then follow exactly as the proof of Theorem \ref{sl1thm}, so we will not repeat it here refering the interested reader instead to our previous work \cite[Lemmas 4.4-4.5, Proposition 4.6]{SaAJ}.)

\begin{prop}
Let $\U\subset H^1(N,\R)$ be a subspace of special Lagrangian deformations of the boundary $N$. Also, let $\d G_{(0,0)}(s,\eta)$ represent the linearzation of the deformation map $G$ at $0$ with moving boundary and $\d G^f_{(0,0)}(\eta)$ represent the linearzation of the deformation map $G^f$ at $0$ with fixed boundary. Then $$\Ind(\d G_{(0,0)})=\dim\U+\Ind(\d G^f_{(0,0)})$$ where $\Ind$ denotes the index of a map.
\end{prop}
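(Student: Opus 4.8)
The plan is to recognize $\d G_{(0,0)}$ as the sum of the fixed-boundary linearization and a finite-rank block indexed by $\U$, and then to run an elementary index count. From Proposition \ref{Gmap} the linearization, viewed as a map of Banach spaces $\d G_{(0,0)}:\U\op L^p_{2+l,\ga}(T^*L)\to F$ with $F=L^p_{1+l,\ga}(\La^2T^*L)\op L^p_{1+l,\ga}(\La^3T^*L)$, is
$$\d G_{(0,0)}(s,\eta)=(\d(\eta+\si_s),*\d^*(\eta+\si_s))=(\d+*\d^*)^p_{2+l,\ga}(\eta)+S(s),$$
where $S:\U\to F$ is $S(s)=(\d\si_s,*\d^*\si_s)$. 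First I would record the two facts that make this decomposition legitimate. One: $S$ is a bounded linear map \emph{into} $F$; linearity of $s\mapsto\si_s$ can be arranged by choosing the identification of $\U\subset H^1(N,\R)$ with harmonic $1$-forms $\ze_s$ on $N$, and the smooth extension $\ze_s\mapsto\si_s$, to be linear, and boundedness into the $\ga$-weighted target is exactly the content of Proposition \ref{Gmap} with $\eta=0$, using that $\si_s$ is asymptotic on the cylindrical end to the harmonic (hence closed and coclosed) form $\ze_s$, so that $\d\si_s$ and $\d^*\si_s$ decay. Two: $S$ has finite rank since $\dim\U<\iy$. Also, by the hypothesis on $\ga$ — that $(0,\ga^2]$ contains no eigenvalue of the Laplacian on functions or on exact $1$-forms on $N$ — Theorem \ref{fred} gives that $P:=(\d+*\d^*)^p_{2+l,\ga}=\d G^f_{(0,0)}$ is Fredholm, with $\ind P=\Ind(\d G^f_{(0,0)})$ the index computed in the proof of Theorem \ref{sl1thm}.

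The heart of the argument is then purely linear algebra. Writing $T=\d G_{(0,0)}$ and $\bar S:\U\to\coker P$ for $S$ followed by the projection $F\to F/\im P$, one checks directly that $\ker T=\{(s,\eta):P\eta=-S(s)\}$ maps onto $\U$ with image $\ker\bar S$ and kernel $\{0\}\op\ker P$, so $\dim\ker T=\dim\ker P+\dim\ker\bar S$, while $\coker T=F/(\im P+\im S)\cong\coker\bar S$, so $\dim\coker T=\dim\coker P-\dim\U+\dim\ker\bar S$. Subtracting, the $\dim\ker\bar S$ terms cancel and
$$\Ind(\d G_{(0,0)})=\ind T=\ind P+\dim\U=\dim\U+\Ind(\d G^f_{(0,0)}),$$
which is the claim; in particular $\d G_{(0,0)}$ is Fredholm. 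Since the count is independent of $p$ and $l$, there is no loss in having fixed those parameters as in the earlier sections.

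I expect the only point requiring real care, and the one I would write out most carefully, to be the first paragraph: verifying that $S$ genuinely lands in the \emph{$\ga$-weighted} space $F$ rather than merely in $L^p_{1+l,\be}$, so that the decomposition $\d G_{(0,0)}(s,\eta)=P\eta+S(s)$ is a statement about the correct Banach spaces and "Fredholm $P$ plus finite-rank $S$" can be invoked. This is not a new estimate — it is assembled from Proposition \ref{Gmap} together with the fact that $\si_s$ is asymptotic to a harmonic form on $N$ — but it is where the admissibility of $\ga$ and the choice of extension $\ze_s\mapsto\si_s$ actually get used. Everything after that is the elementary index identity above.
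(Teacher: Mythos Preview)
Your proposal is correct and follows essentially the same route as the paper: both recognize the decomposition $\d G_{(0,0)}(s,\eta)=\d G^f_{(0,0)}(\eta)+S(s)$ with $S$ a finite-rank map on the $\U$-factor, and both deduce the index formula from this. The only difference is presentational: the paper appeals implicitly to the stability of the Fredholm index under finite-rank perturbation (first replacing $\d G_{(0,0)}$ by $\d G^f_{(0,0)}$ extended trivially over $\U$, then reading off the $\dim\U$ shift), whereas you carry out the kernel/cokernel bookkeeping explicitly via the induced map $\bar S:\U\to\coker P$; your extra care in checking that $S$ lands in the $\ga$-weighted target is well placed and is exactly what the paper leaves to Proposition~\ref{Gmap}.
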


\begin{proof}
At $s=0$ $$G(0,\eta)=G^f(\eta)=(\pi_*(-\om|_{\Ga_{\eta}}), \pi_*((\Im\Om)|_{\Ga_{\eta}}))$$ with linearization at $(0,0)$ $$\d G_{(0,0)}(0,\eta)=\d G^f_{(0,0)}(\eta).$$ Since $\d G_{(0,0)}$ is linear, we have $$\d G_{(0,0)}(s,\eta)=\d G_{(0,0)}(s,0)+\d G^f_{(0,0)}(0,\eta)$$ where $\d G_{(0,0)}(s,0)$ is finite-dimensional, $s\in T_0\U\cong\R^d$, $d=\dim\ker\Up$. Then
$$\Ind(\d G_{(0,0)}:\U\t L^p_{2+l,\ga}(T^*L)\to L^p_{1+l,\ga}(\La^2T^*L)\op L^p_{1+l,\ga}(\La^3T^*L))$$
$$=\Ind(\d G^f_{(0,0)}:\U\t L^p_{2+l,\ga}(T^*L)\to L^p_{1+l,\ga}(\La^2T^*L)\op L^p_{1+l,\ga}(\La^3T^*L))$$
$$=\dim\U+\Ind(\d G^f_{(0,0)}).$$
\end{proof}

\begin{prop}
The dimension of $\M^{\ga}_L$, the moduli space of special Lagrangian deformations of an asymptotically cylindrical special Lagrangian submanifold $L$ asymptotic to $N_s\t\{p\}\t(R,\infty)$, $s\in\U$, with decay rate $\ga$ is $$\dim\M^{\ga}_L=\dim V+b^2(L)-b^1(L)+b^0(L)-b^2(N)+b^1(N)$$.
\end{prop}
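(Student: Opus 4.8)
The plan is to read off $\dim\M^\ga_L$ from the Implicit Mapping Theorem as $\dim\ker(\d G_{(0,0)})$, evaluate that number using the index identity of the preceding proposition together with the fact that $G$ takes values in the image $\mathcal C$ of $(\d+*\d^*)^p_{2+l,\ga}$, and then convert the answer into Betti numbers using the long exact sequence \eqref{les} and Poincar\'e duality on $L$.

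First I would observe that, exactly as in \cite[Lemmas 4.4--4.5, Proposition 4.6]{SaAJ}, applying the Implicit Mapping Theorem for Banach spaces to the smooth map $G$ shows that $\M^\ga_L$ is locally homeomorphic to a neighborhood of $0$ in $\ker(\d G_{(0,0)})$, so that $\dim\M^\ga_L=\dim\ker(\d G_{(0,0)})$. To compute this, write $\dim\ker(\d G_{(0,0)})=\Ind(\d G_{(0,0)})+\dim\coker(\d G_{(0,0)})$. Now $\d G_{(0,0)}$ and $\d G^f_{(0,0)}=(\d+*\d^*)^p_{2+l,\ga}$ have the same image, namely $\mathcal C$: by Proposition \ref{Gmap} one has $\d G_{(0,0)}(s,\eta)=\d G_{(0,0)}(s,0)+(\d+*\d^*)^p_{2+l,\ga}\eta$, while $\d G_{(0,0)}(s,0)$ lies in $\mathcal C$ because $G$ maps into the closed subspace $\mathcal C$ by Proposition \ref{image}. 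Hence $\coker(\d G_{(0,0)})\cong\coker(\d G^f_{(0,0)})$, and combining this with the index identity $\Ind(\d G_{(0,0)})=\dim\U+\Ind(\d G^f_{(0,0)})$ of the preceding proposition gives
\begin{equation*}
\dim\ker(\d G_{(0,0)})=\dim\U+\Ind(\d G^f_{(0,0)})+\dim\coker(\d G^f_{(0,0)})=\dim\U+\dim\ker(\d G^f_{(0,0)}).
\end{equation*}
Since $\dim\ker(\d G^f_{(0,0)})=\dim\ker((\d+*\d^*)^p_{2+l,\ga})=\dim V$ by \cite{SaAJ} and $\dim\U=\dim\ker\Up$ (as $\U$ is an open neighborhood of $0$ in $\ker\Up$), we conclude $\dim\M^\ga_L=\dim V+\dim\ker\Up$. (One sees the same thing more directly: for each $s\in T_0\U$ there is an $\eta$ with $\d G_{(0,0)}(s,\eta)=0$, unique modulo $\ker((\d+*\d^*)^p_{2+l,\ga})$, so $\ker(\d G_{(0,0)})$ fibres over $T_0\U$ with fibre $\ker((\d+*\d^*)^p_{2+l,\ga})$.)

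It then remains to express $\dim\ker\Up$ in terms of Betti numbers. Since $L$ is connected and noncompact, $H^0_{cs}(L)=0$ and $H^3(L)=0$, so \eqref{les} is exact as displayed. Exactness at $H^1(N)$ gives $\ker\Up=\im\bigl(H^1(L)\to H^1(N)\bigr)$, so $\im\Up\cong H^1(N)/\ker\Up$ has dimension $b^1(N)-\dim\ker\Up$ and the truncated sequence
\begin{equation*}
0\to\im\Up\to H^2_{cs}(L)\to H^2(L)\to H^2(N)\to H^3_{cs}(L)\to 0
\end{equation*}
is exact. Taking the alternating sum of dimensions here and inserting the Poincar\'e duality identities $\dim H^2_{cs}(L)=b^1(L)$ and $\dim H^3_{cs}(L)=b^0(L)$ on the oriented $3$-manifold $L$ yields $\dim\ker\Up=b^2(L)-b^1(L)+b^0(L)-b^2(N)+b^1(N)$. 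Substituting into $\dim\M^\ga_L=\dim V+\dim\ker\Up$ gives the claimed formula; note that this also matches the expression $\dim V+\dim\ker\Up$ in Theorem \ref{mslthm1}.

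The only genuinely new ingredients beyond the fixed-boundary case are the index identity of the preceding proposition and the verification (carried out in Proposition \ref{Gmap} and in the proposition showing $G$ takes values in exact forms) that the extensions $\si_s$ can be chosen so that $G(s,0)$, hence $\d G_{(0,0)}(s,0)$, lands in $\mathcal C$; granting these, the count above is routine. I expect the main obstacle to be purely a matter of bookkeeping: one must check that the weighted-Sobolev kernels and cokernels occurring above coincide with the geometric cohomological data of \cite{SaAJ} (so that $\dim\ker((\d+*\d^*)^p_{2+l,\ga})=\dim V$ and $\coker\cong\ker$ of the formal adjoint), that the cokernel is genuinely unchanged by moving the boundary, and that the exact-sequence and Poincar\'e-duality count is carried through without index errors, bearing in mind that $N$ need not be connected.
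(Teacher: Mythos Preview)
Your proof is correct and follows essentially the same approach as the paper: reduce $\dim\M^\ga_L$ to $\dim\ker(\d G_{(0,0)})$, use the index identity of the preceding proposition to get $\dim\ker(\d G_{(0,0)})=\dim\ker\Up+\dim V$, and then compute $\dim\ker\Up$ via the alternating sum in \eqref{les} together with Poincar\'e duality $b^2_{cs}(L)=b^1(L)$, $b^3_{cs}(L)=b^0(L)$. Your argument is in fact more careful than the paper's at one point: the paper passes from the index identity to the kernel identity with a bare ``In particular,'' whereas you supply the missing justification by observing (via Proposition~\ref{image}) that $\d G_{(0,0)}$ and $\d G^f_{(0,0)}$ have the same image $\mathcal C$, hence the same cokernel.
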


\begin{proof}
Let $b^k(N)$, $b^k_{cs}(L)$ and $b^k(L)$ be the corresponding Betti numbers. We know from the previous proposition that $$\Ind(\d G_{(0,0)})=\dim\U+\Ind(\d G^f_{(0,0)}).$$ In particular, $$\dim(\ker\d G_{(0,0)})=\dim(\ker\Up)+\dim(\ker\d G^f_{(0,0)},$$ that is, the dimension of the moduli space for the moving boundary case is the sum of the dimension of the moduli space for the fixed boundary case and the dimension of the kernel of $\Up$. Taking alternating sums of dimensions in the long exact sequence (\ref{les}) shows that the dimension of the kernel of $\Up$ is $$\dim(\ker\Up)=b^1(N)-b^2_{cs}(L)+b^2(L)-b^2(N)+b^3_{cs}(L)$$ $$=b^2(L)-b^1(L)+b^0(L)-b^2(N)+b^1(N).$$
\end{proof}

\section{Theorem \ref{lagimm}}
Before we prove Theorem \ref{lagimm}, we need just a bit of background. Let $(X,\om,g,\Om)$ be an asymptotically cylindrical Calabi-Yau $3$-fold asymptotic to $M\t S^1\t(R,\infty)$, and $L$ an asymptotically cylindrical special Lagrangian submanifold of $X$ asymptotic to $N\t\{p\}\t(R',\infty)$. Let $E$ be a fixed rank one vector bundle over $L$. A connection $D_E$ on $L$ has finite energy if $$\int_L|F_E|^2\d V<\infty$$ where $F_E$ is the curvature of the connection $D_E$ and $\d V$ is the volume form with respect to the metric.

\begin{dfn}
Using the notation above, the pair $(L,D_E)$ is called a special Lagrangian cycle if $L$  is a special Lagrangian submanifold of $X$ asymptotic to $N\t\{p\}\t(R',\infty)$ and $D_E$ is a unitary flat connection over $L$ with finite energy.

The pair $(N,D_E')$ is a special Lagrangian cycle if $D_E'$ is a unitary flat connection over $N$ induced from $D_E$.
\end{dfn}

Let $\mathcal{M}^{SLag}(M)$ be the moduli space of special Lagrangian cycles in $M$. In \cite{Hi}, Hitchin proves the following theorem:

\begin{thm}
The tangent space to $\mathcal{M}^{SLag}(M)$ is naturally identified with the space $H^1(N,\R)\t H^0(N,ad(E'))$. For line bundles over $N$, the cup product $\cup:H^1(N,\R)\t H^0(N,\R)\to\R$ induces a symplectic structure on $\mathcal{M}^{SLag}(M)$.
\end{thm}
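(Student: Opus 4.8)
\noindent
The plan is to follow Hitchin \cite{Hi}, splitting the deformation problem for a special Lagrangian cycle $(N,D_{E'})$ in $M$ into a geometric part (deforming the special Lagrangian $N$) and a gauge-theoretic part (deforming the flat unitary connection $D_{E'}$ modulo gauge), identifying the tangent space of each with cohomology of $N$, and then extracting the symplectic form from Poincar\'e duality on the surface $N$. First I would invoke McLean's deformation theorem \cite{McLe} for the compact special Lagrangian $N\subset M$: a normal vector field $v$ corresponds, via the induced metric together with the K\"ahler form $\ka_I$ of $M$, to the $1$-form $\ka_I(v,\cdot\,)$ on $N$; the linearized special Lagrangian equations ($\om|_{N}\equiv0$ and $\Im\Om|_N\equiv0$) force this form to be closed and coclosed, and since $N$ is compact Hodge theory identifies the space of such harmonic $1$-forms with $H^1(N,\R)$. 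McLean's theorem then yields that the moduli space of special Lagrangian deformations of $N$ is a smooth manifold near $N$ with tangent space $H^1(N,\R)$; this is the first factor.

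Next I would treat the connection. The moduli of unitary flat connections on $E'$ modulo the unitary gauge group is governed near $D_{E'}$ by the twisted de Rham complex $(\Om^{\bullet}(N,ad(E')),\d_{D_{E'}})$, the finite-energy hypothesis keeping us in the flat locus; for a line bundle $ad(E')$ is the trivial real line bundle, so the cohomology of this complex is computed by ordinary Hodge theory on $N$ and supplies the second factor of the tangent space, with the covariantly constant infinitesimal gauge transformations, $H^0(N,ad(E'))$, accounted for as the residual stabilizer. I would then verify that the two deformation problems decouple at first order: McLean's operator acts on normal vector fields and the twisted de Rham operator on $ad(E')$-valued forms, with no coupling terms in the linearization, so that $\mathcal{M}^{SLag}(M)$ is a smooth manifold near $(N,D_{E'})$ whose tangent space is the indicated product.

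Finally, for the symplectic structure when $E'$ is a line bundle, the key observation is that on the oriented closed surface $N$ the pairing $(\al,\be)\mapsto\int_N\al\w\be$ given by cup product followed by evaluation on the fundamental class is skew-symmetric and, by Poincar\'e duality, perfect --- it is the intersection form of $N$ --- so transporting it through the tangent space identification above, i.e.\ pairing the geometric factor against the connection factor by cup product, yields a $2$-form $\vp$ on $\mathcal{M}^{SLag}(M)$. Closedness of $\vp$ follows because in the natural period coordinates furnished by the harmonic representatives the pairing is locally constant, and nondegeneracy is exactly nondegeneracy of the intersection form, so $\vp$ is symplectic. I expect the main obstacle to be the middle step --- showing the combined moduli space of pairs is genuinely unobstructed and that its tangent space splits exactly as claimed, which requires running McLean's elliptic package alongside Hodge theory for the twisted complex and checking the vanishing of the cross terms --- together with the careful cohomological bookkeeping (exactly which cohomology groups of $N$ and of $ad(E')$ enter) and the verification that the cup-product form is closed on all of $\mathcal{M}^{SLag}(M)$ rather than merely at a point.
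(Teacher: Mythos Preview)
The paper does not prove this theorem: it is quoted verbatim as a result of Hitchin and cited from \cite{Hi}, with no argument supplied. There is therefore no ``paper's own proof'' to compare your proposal against; what you have written is, appropriately, a sketch of Hitchin's original argument (McLean's deformation theorem for the special Lagrangian factor, Hodge theory for the flat-connection factor, and Poincar\'e duality on the surface $N$ for the symplectic form).

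One point worth tightening in your sketch: you describe $H^0(N,ad(E'))$ as the ``residual stabilizer'' of infinitesimal gauge transformations, but the statement asserts that $H^0(N,ad(E'))$ is a \emph{factor of the tangent space}, not a stabilizer. The tangent space to the moduli of flat unitary connections on $E'$ modulo gauge is $H^1(N,ad(E'))$, not $H^0$; the appearance of $H^0$ in the statement (and the cup product $H^1\times H^0\to\R$) reflects Hitchin's specific formulation, in which the connection factor is identified, via Poincar\'e duality on the $n$-real-dimensional special Lagrangian, with $H^{n-1}(N)$ --- here $n=2$ in the paper's conventions for the K3 cross-section, but the statement as written literally makes sense only when $N$ is a curve. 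In any case this is an issue with the statement as transcribed in the paper rather than with your strategy, and since the paper offers no proof of its own, your outline is as faithful a reconstruction as one can give.
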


We now prove Theorem \ref{lagimm}.

\begin{proof}[Proof of Theorem \ref{lagimm}]
Let $\de:H^0(N)\to H^1(L,N)$, $j^*:H^1(L,N)\to H^1(L)$, $i^*:H^1(L)\to H^1(N)$ be the canonical maps. They give dual maps $i_*:H_1(N)\to H_1(L)$, $j_*:H_1(L)\to H_1(L,N)$ and $\partial:H_1(L,N)\to H_0(N)$. Then we have the following long exact sequence:
\begin{equation*}
\begin{array}{ccccccccccccc}
    \cdots & \to & H^0(N) & \to & H^1(L,N) & \to & H^1(L) & \to & H^1(N) & \to & H^2(L,N) & \to & \cdots\\
    & & \downarrow\cong & & \downarrow\cong & & \downarrow\cong & & \downarrow\cong & & \downarrow\cong & & \\
    \cdots & \to & H_1(N) & \to & H_1(L) & \to & H_1(L,N) & \to & H_0(N) & \to & H_0(L) & \to & \cdots\\
\end{array}
\end{equation*}
Note that the classes coming from the boundary $N$ should have zero self-intersection, so we can rewrite the long exact sequences above starting from $0$ instead of $H^0(N)$ and $H_1(N)$. (Note that we are not claiming that $H^0(N)$ and $H_1(N)$ are trivial!)

From the sequences we get $$\ker\de\cong\ker i_*\cong\im i^*\cong\im\partial\cong \frac{H_1(L,N)}{\ker\partial}= \frac{H_1(L,N)}{\im j_*}$$ which implies
\begin{equation}
\dim(\ker\de)=\dim(H_1(L,N))-\dim(\im j_*)
\label{dim}
\end{equation}
Note that the only classes that survive in $H_1(L)\to H_1(L,N)$, i. e., do not go to zero, have self-intersection zero. Thus, one can indentify $\im j_*$ with $H_1(L)$. This implies that there is a short exact sequence $$0\to\im j_*\to H_1(L,N)\to\im\partial\to 0,$$ or equivalently $$0\to H_1(L)\to H_1(L,N)\to\ker\de\to 0.$$ Then as a consequence of (\ref{dim}) and Theorem \ref{mslthm1} we conclude that $H^1(L,N)=V\op\ker\de$ parameterizes the deformations of $L$ with moving boundary $\partial L$; from our previous work, it follows that $H^1(L)\cong V$ parameterizes the deformations of $L$ with fixed boundary, and McLean showed that $H^1(N)$ gives the special Lagrangian deformations of $N$.

Note that the linearization of the boundary value map $\mathcal{M}^{SLag}(X)\to\mathcal{M}^{SLag}(M)$ in this theorem is given by $i^*$ and for the connection part $\be:H^0(L)\to H^0(N)$. It is straightforward that $\im i^*\op\im\be$ is a subspace of $H^1(N)\op H^0(N)$. By definition of the cup product, the symplectic structure reduces to $0$ on $\im i^*\op\im\be$ and by Poincar\'e duality, $\dim(\im i^*\op\im\be)=\frac{1}{2}\dim(H^1(N)\op H^0(N))$. Thus, we conclude that $\im i^*\op\im\be$ is a Lagrangian subspace of $H^1(N)\op H^0(N)$ with the symplectic structure defined above and conclude the proof of this theorem.
\end{proof}

\end{document}